\documentclass[10pt,a4paper]{article}
\usepackage[margin=1.25cm]{geometry}
\usepackage{amsmath,amsthm,amsfonts,amssymb,amscd,cite,graphicx,epstopdf}
\usepackage{amssymb}
\usepackage{stmaryrd}
\usepackage{float}

\usepackage{titlesec}
\titleformat{\section}
{\normalfont\fontsize{12}{15}\bfseries}{\thesection}{1em.}{}

\newtheorem{proposition}{Proposition}[section]

\newtheorem{corollary}{Corollary}[section]

\newtheorem{definition}{Definition}[section]
\newtheorem{theorem}{Theorem}[section]
\newtheorem{remark}{Remark}[section]
\newtheorem{example}{Example}[section]

\allowdisplaybreaks[4]

\let\oldbibliography\thebibliography
\renewcommand{\thebibliography}[1]{%
	\oldbibliography{#1}%
	\setlength{\itemsep}{-2pt}%
}

\begin{document}
	
	\baselineskip=0.20in


	\noindent
	{\large \bf Eigenvalues of Hypergraph Products and Reciprocal Eigenvalue Property}\\

	\noindent
	Shashwath S Shetty$^{1}$, K Arathi Bhat$^{2,}\footnote{Corresponding author (arathi.bhat@manipal.edu)}$\\
	
	\noindent
	\footnotesize $^{1,2}${\it Manipal Institute of Technology, Manipal Academy of Higher Education, Manipal, India}\\


	\setcounter{page}{1} \thispagestyle{empty}

	\baselineskip=0.20in

	\normalsize
	
	\begin{abstract}
		\noindent
		
		Spectral hypergraph theory has recently attracted considerable interest as it provides a natural framework for modeling higher-order relationships beyond classical graphs. In this setting, eigenvalues of adjacency, Laplacian, and signless-Laplacian hypermatrices play an important role in understanding the underlying structure of hypergraphs. In this work, we study the adjacency, Laplacian, and signless-Laplacian eigenvalues of the join, Kronecker, and corona products of hypergraphs, and examine how these spectra behave under such operations. These investigations help in better understanding the interplay between hypergraph structure and spectral properties. The reciprocal eigenvalue property is of particular interest due to the spectral symmetries it reflects. Motivated by this, we extend the notion of reciprocal eigenvalue property to hypergraphs and show that power hypertrees do not satisfy this property.
		\\[2mm]
		{\bf Keywords:} join; corona; Kronecker; main eigenvalue; reciprocal eigenvalue.\\[2mm]
		{\bf 2020 Mathematics Subject Classification:} 05C65, 15A69, 15A18.
	\end{abstract}
	
	\baselineskip=0.20in

	\section{Introduction}
	The study of spectra of graph products is a well-established topic in algebraic graph theory, providing explicit relationships between the eigenvalues of product graphs and those of their factors. In graph theory, various graph products are employed to construct the graph with desired property.
	For graphs $G_1$ and $G_2$ with adjacency matrices $A(G_1)$ and $A(G_2)$, many standard graph products admit representations in terms of Kronecker-type operations. For instance, the adjacency matrix of the Cartesian product satisfies $A(G_1 \square G_2) = A(G_1)\otimes I + I \otimes A(G_2)$, which implies that its eigenvalues are of the form $\lambda_i + \mu_j$, where $\lambda_i \in \sigma(A(G_1))$ and $\mu_j \in \sigma(A(G_2))$. Similarly, for the tensor (Kronecker) product, one has $A(G_1 \times G_2) = A(G_1)\otimes A(G_2)$, yielding eigenvalues $\lambda_i \mu_j$.
	For the detailed survey on the spectrum of various graph products, one can refer to \cite{barik2018spectra}.
	These structured relationships illustrate how spectral properties are preserved and combined under graph products, and they serve as a foundation for extending spectral analysis to uniform hypergraphs.

	Eigenvalues of hypergraph products are studied by extending spectral theory from matrices to higher-order tensors associated with hypergraphs. Given two uniform hypergraphs $H_1$ and $H_2$, the adjacency (Laplacian, signless-Laplacian) hypermatrix of their product hypergraph (some) inherits structured patterns from the corresponding hypermatrices of $H_1$ and $H_2$. The spectrum of the product hypergraph can then be related to the spectra of $H_1$ and $H_2$ through algebraic identities that generalize classical results for graph products.
	By the eigenvalue of the hypergraph, we mean the eigenvalue of the adjacency hypermatrix of the hypergraph. 
	If $\lambda_1$ is an eigenvalue of $\mathcal{H}_1$ and $\lambda_2$ is an eigenvalue of $\mathcal{H}_2$, it was shown that
	
	\begin{itemize}
		\item \cite{cooper2012spectra} $\lambda_1, \lambda_2$ are the eigenvalues of the vertex-disjoint union of $\mathcal{H}_1$ and $\mathcal{H}_2,$ 
		\item \cite{cooper2012spectra} $\lambda_1 + \lambda_2$ is an eigenvalue of the Cartesian product of $\mathcal{H}_1$ and $\mathcal{H}_2,$ and 
		\item \cite{pearson2013eigenvalues,shao2013general} $\lambda_1\lambda_2$ is an eigenvalue of the Kronecker product of $\mathcal{H}_1$ and $\mathcal{H}_2. $
	\end{itemize}
	
	In this work, we consider the problem of determining the relation between the adjacency, Laplacian, and signless-Laplacian eigenvalues of the join, Kronecker, and corona products of two hypergraphs with the eigenvalues of the factored hypergraphs.

	A graph is said to satisfy the \textit{reciprocal eigenvalue property} ($R$-property) \cite{barik2007spectrum} if $\lambda$ is in the spectrum of the graph implies $\frac{1}{\lambda}$ is also in the spectrum of the graph. 
	A graph is said to satisfy the \textit{strong-reciprocal eigenvalue property} ($SR$-property) \cite{barik2007spectrum} if the existence of $\lambda$ as an eigenvalue of the graph with multiplicity $m_{\lambda}$ implies the existence of the eigenvalue $\frac{1}{\lambda}$ with the same multiplicity $m_{\lambda}$. It is easy to see that the graphs satisfying both the $R$-property and $SR$-property are non-singular, as the graphs under consideration are finite. 
	Also, a singular graph is said to satisfy the \textit{weak-reciprocal eigenvalue property} \cite{barik2007spectrum} if the non-zero part of the spectrum satisfies the $R$-property.

	In the case of trees, Barik and Pati \cite{barik2006nonsingular} have proved that $R$-property and $SR$-property are equivalent, and a complete characterization of trees satisfying this property (corona trees) was obtained. The characterization of unicyclic graphs with $SR$-property was carried out in \cite{barik2008unicyclic}.
	The weighted trees (of order at least 8) satisfying $R$-property were characterized by Neumann and Pati \cite{neumann2013reciprocal}, and the corresponding problem for weighted graphs was addressed by Panda and Pati \cite{panda2015inverse}. 
	The problem of characterizing weighted graphs with $SR$-property was considered by Bapat et al. \cite{bapat2016strong}.
	Until then, several families of graphs were known for which property $R$ and $SR$ are equivalent. However, Panda and Pati \cite{panda2016graphs} have shown that property $R$ and $SR$ are not equivalent in general by constructing a class of graphs that satisfies property $R$ but not $SR$.
	Barik et al. \cite{barik2022trees} proved that there are no non-trivial trees satisfying the weak reciprocal eigenvalue property. Subsequently, Barik et al. \cite{barik2024singular} constructed a class of connected weighted singular graphs whose non-zero spectrum satisfies $SR$-property.
	In addition, several other significant contributions in this direction can be found in \cite{panda2017some, bapat2017self, barik2024non, liu2025classes, kadu2024reciprocal, barik2021classes, kalita2014reciprocal}.
	
	Section \ref{sec_2} introduces the necessary preliminaries, including definitions of hypergraphs, hypermatrices, their standard products, and the concept of main eigenvalues for hypergraphs.
	Section \ref{sec_3} develops explicit expressions for the adjacency, Laplacian, and signless Laplacian eigenvalues corresponding to the join, Kronecker, and corona products of two uniform hypergraphs.
	The final section shows that power hypertrees do not satisfy the weak reciprocal eigenvalue property and further extends this notion by proposing a generalized definition of the reciprocal eigenvalue property for hypergraphs.
	\section{Preliminaries}\label{sec_2}
	
	A hypermatrix of order $r, r \geq 1$ is a generalization of the notion of a matrix, and is represented as a $n_1 \times n_2 \times \ldots \times n_r$ multi-array whose entries are denoted using the $r$-tuple of indices, $(i_1,\ldots, i_r),$ where $1 \leq i_j \leq n_j, ~ 1 \leq j \leq r.$    
	A hypermatrix $\mathcal{T}$ of dimension $n_1 \times \ldots \times n_t $ is said to be a square hypermatrix of dimension $n$ if each $n_i, ~ 1 \leq i \leq t$ is either equal to $n$ or equal to $1$.
	For example, a square matrix can be said to be a hypermatrix of dimension $1 \times n \times n,$ or $n \times 1 \times n,$ or $n \times n \times 1$. 
	In general, any hypermatrix of lower order can be viewed as a hypermatrix with prescribed coordinates of higher (expected) order in this manner. 
	This is exactly how the dimension of the row vector is written as $1 \times n$ and that of the column vector as $n \times 1. $ Strictly speaking, $\mathcal{T}$ is said to be a square hypermatrix of order $t$ and dimension $n$ if each $n_i = n, ~ 1 \leq i \leq t$.
	
	In 2005, the eigenvalues and eigenvectors for a square hypermatrix was defined by Qi \cite{qi2005eigenvalues} and the singular values of a non-square hypermatrix was defined by Lim \cite{lim2005singular}. 
	A pair $(\lambda, \mathbf{y})$ of a complex number and a complex vector of length $n$ is said to form an eigenpair for a square hypermatrix $\mathcal{T}$ of order $r$ and dimension $n$ if it satisfies the following condition:
	\begin{equation*}
		\mathcal{T} \mathbf{y}^{r-1} = \lambda \mathbf{y}^{[r-1]},
	\end{equation*}
	where $\mathcal{T} \mathbf{y}^{r-1} $ is a vector of length $n$ whose $j$-th entry is given by 
	\begin{equation*}
		(\mathcal{T} \mathbf{y}^{r-1})_j = \sum\limits_{j_2,\ldots, j_r=1}^n \mathcal{T}_{jj_2\ldots j_r}x_{j_2}\cdots x_{j_r}, 
	\end{equation*}
	and $\mathbf{y}^{[r-1]}$ is also a vector of length of $n$  whose $j$-th entry is given by  $(\mathbf{y}^{[r-1]})_j = y_j^{r-1}.$ 
	
	Formally, a hypergraph $\mathcal{H}$ is a triple $(\mathcal{V}, \mathcal{E}, \phi)$, where the elements of $\mathcal{V}$ are called the vertices and there exists an injective map $\phi$ from $\mathcal{E}$ to the power set of $\mathcal{V}$, which is denoted by $2^{\mathcal{V}*}$. To avoid confusion, we denote by $2^{\mathcal{V}*}$, the set of all non-empty subsets of the vertex set.
	For the sake of simplicity, one can define a hypergraph as an ordered pair $(\mathcal{V}, \mathcal{E})$ of vertices and hyperedges, where  $\mathcal{E}$ is a subset of  $2^{\mathcal{V}*}$ (that is, identifying $e$ and the image of $e \in \mathcal{E}$ under $\phi$). Throughout this article, we treat the elements of $\mathcal{E}$ as a subset of the vertex set.
	The degree of a vertex $i$ in a hypergraph $\mathcal{H}$ is the number of hyperedges of $\mathcal{H}$ containing $i$, and is denoted by $d_{\mathcal{H}}(i)$ or simply $d(i)$ if there is no confusion in the hypergraph under consideration. A path between two distinct vertices in a hypergraph is an alternating sequence of vertices and hyperedges that starts and ends with a vertex such that the consecutive elements in the sequence are incident, and all the elements in the sequence are distinct. If there exists a path between every two distinct vertices in a hypergraph, then we call it a connected hypergraph.
	
	The size of a hyperedge $e$ is the number of vertices in $e$, that is, $|e|$. If the size of all the hyperedges of the hypergraph are equal, say to $r$, then the hypergraph is said to be an $r$-uniform hypergraph. For an $r$-uniform hypergraph $\mathcal{H}$ on the vertex set $[n]:=\{1,2,\ldots,n\}$, the associated adjacency hypermatrix \cite{cooper2012spectra}, $\mathcal{A}=\mathcal{A}(\mathcal{H})$, is an order $r$, dimension $n$ hypermatrix whose $(i_1,\ldots,i_r)$-th entry is $\frac{1}{(r-1)!} $ if $\{ i_1, \ldots, i_r\}$ is a hyperedge of $\mathcal{H}$, and zero otherwise. 
	Suppose that $\mathcal{D}$ denotes the order $r$, dimension $n$ hypermatrix, whose $(i_1, \ldots, i_r)$-th entry is given by $d_{\mathcal{H}}(i_1)$ if $i_1=\cdots = i_r,$ and zero otherwise. The Laplacian and sign-Laplacian hypermatrix of $\mathcal{H}$ are defined as $\mathcal{D}-\mathcal{A}$ and $\mathcal{D} + \mathcal{A},$ respectively.
	

	\subsection{Standard Product of two Hypermatrices}
	
	Shao \cite{shao2013general} proposed a general product of two symmetric, dimension $n$, square hypermatrices $\mathcal{T}$ and $\mathcal{S}$ of varied orders. While this definition enables numerous applications, as explored in \cite{shao2013general} and subsequent works, it exhibits from several key limitations: under \emph{vector from the left}, it applies only when the first hypermatrix has order at least $2$, preventing left-multiplication of a compatible vector by a hypermatrix; under \emph{restricted times multiplications to a vector}, for a hypermatrix $\mathcal{A}$ of order $r \geq 3$ and vector $\mathbf{x}$, $\mathcal{A} \mathbf{x}^{r-1}$ is computable, but $\mathcal{A} \mathbf{x}^i$ cannot be formed for $1 \leq i \leq r-2$; under \emph{hidden modes}, it does not support multiplication by $i \leq r$ distinct vectors (or by any hypermatrix) across all $i$ modes of the hypermatrix; and under \emph{high order}, the product of two higher-order  hypermatrices yields a hypermatrix of very  high order.
	
	Let $A=(a_{i_1i_2})$ and $B=(b_{j_1j_2})$ be two square matrices of dimension $n$. 
	Can the product $A\cdot B$ be viewed as the multiplication of the matrix $A$ along its second coordinate to the matrix $B$ along its first coordinate? If so, then why not the multiplication of $A$ along its first coordinate to a matrix $B$ along its first coordinate, and all other possibilities? But these possibilities of the products are simplified using the transpose of the matrix. 
	Given $i, j \in \{ 1,2 \},$ suppose that $A \substack{^i * ^j} B$ denotes the multiplication of $A$ along its coordinate $i$ to the matrix $B$ along its coordinate $j$.
	Hence, it can be seen that $A\cdot B= A \substack{^2 * ^1} B,~ A^\intercal \cdot B = A \substack{^1 * ^1} B, ~ A\cdot B^\intercal= A \substack{{^2} * {^2}} B $ and $A^\intercal\cdot B^\intercal= A \substack{^1 * ^2} B.$
	
	The $r$-mode product of a hypermatrix and a matrix (or a vector) was introduced in \cite{bader2006algorithm}. Also, the $1$-mode product of two hypermatrices was defined in \cite{comon2002tensor}. In our proposed definition, we generalize this idea to the all-mode product of two hypermatrices. Let us start with defining the multiplication along the \emph{last} coordinate of the first hypermatrix to the first coordinate of the second hypermatrix.

	\begin{definition}\label{product_dot}
		Given an order $t$ hypermatrix $\mathcal{T}$ of dimension $n_1 \times \ldots n_{t}$, and an order $s$ hypermatrix $\mathcal{S}$ of dimension $m_1 \times \ldots \times m_s.$ If $n_t=m_1$, say equal to $n$, then the product $\mathcal{M} := \mathcal{T} \cdot \mathcal{S}$ is a hypermatrix of order $t+s-2,$ and dimension $n_1 \times \ldots \times n_{t-1} \times m_2 \times \ldots m_s, $ whose $(i_1,\ldots,i_{t+s-2})$-th entry is given by 
		\begin{equation*}
			\mathcal{M}_{i_1\ldots i_{t+s-2}} = \sum\limits_{k=1}^n \mathcal{T}_{i_1 \ldots i_{t-1}k} \mathcal{S}_{ki_{t} \ldots i_{t+s-2}}.
		\end{equation*}
	\end{definition}

	\begin{proposition}
		Suppose that $\mathcal{B}, \mathcal{T} $ and $ \mathcal{S} $ are the hypermatrices of dimensions $l_1 \times \ldots \times l_b, ~ n_1 \times \ldots \times n_t$ and $m_1 \times \ldots \times m_s$, respectively.
		If $b \geq 2,$  $l_b = n_1$ and $n_t = m_1,$ then 
		$$(\mathcal{B} \cdot \mathcal{T}) \cdot \mathcal{S} = \mathcal{B} \cdot (\mathcal{T} \cdot \mathcal{S}).$$
	\end{proposition}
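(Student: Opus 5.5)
The plan is a direct entrywise computation: expand both sides using Definition \ref{product_dot} and observe that each reduces to the same double sum. First I would check that both sides are defined and have the same shape. Since $l_b = n_1$, the product $\mathcal{B}\cdot\mathcal{T}$ exists. It has order $b+t-2$ and dimension $l_1\times\cdots\times l_{b-1}\times n_2\times\cdots\times n_t$. Its last dimension is $n_t = m_1$, so $(\mathcal{B}\cdot\mathcal{T})\cdot\mathcal{S}$ is defined, with order $b+t+s-4$ and dimension $l_1\times\cdots\times l_{b-1}\times n_2\times\cdots\times n_{t-1}\times m_2\times\cdots\times m_s$.

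On the other side, $\mathcal{T}\cdot\mathcal{S}$ exists because $n_t=m_1$. It has dimension $n_1\times\cdots\times n_{t-1}\times m_2\times\cdots\times m_s$. Its first dimension is $n_1 = l_b$, so $\mathcal{B}\cdot(\mathcal{T}\cdot\mathcal{S})$ is defined and has the same order and dimension as above. The hypothesis $b\ge 2$ ensures that $\mathcal{B}$ has a genuine last coordinate, distinct from the ones kept. I would also note that if $t=1$ the middle factor is a vector; then $n_1=n_t$ is used for both contractions and the index bookkeeping below degenerates. I would treat that case separately, and it is immediate.

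Next I would fix a multi-index $(i_1,\ldots,i_{b-1},j_2,\ldots,j_{t-1},k_2,\ldots,k_s)$ and expand the left side using the definition twice:
\[
\bigl((\mathcal{B}\cdot\mathcal{T})\cdot\mathcal{S}\bigr)_{i_1\ldots i_{b-1}j_2\ldots j_{t-1}k_2\ldots k_s}
=\sum_{q=1}^{m_1}\sum_{p=1}^{l_b}\mathcal{B}_{i_1\ldots i_{b-1}p}\,\mathcal{T}_{pj_2\ldots j_{t-1}q}\,\mathcal{S}_{qk_2\ldots k_s}.
\]
The right side expands to the same triple product, with the sums in the opposite order. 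Since both sums are finite, they can be interchanged, and the two entries agree.

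I expect no real obstacle. The only delicate point is the index bookkeeping: one must verify that the coordinate of $\mathcal{B}\cdot\mathcal{T}$ contracted with $\mathcal{S}$ is exactly the original last coordinate of $\mathcal{T}$. This holds because Definition \ref{product_dot} places the surviving indices of $\mathcal{T}$ after those of $\mathcal{B}$, in order. Checking this carefully is what the argument needs, together with the small-order edge case.
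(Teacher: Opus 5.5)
Your main computation is the paper's proof. You expand both sides entrywise with Definition~\ref{product_dot} into the same double sum $\sum_{q}\sum_{p}\mathcal{B}_{i_1\ldots i_{b-1}p}\,\mathcal{T}_{pj_2\ldots j_{t-1}q}\,\mathcal{S}_{qk_2\ldots k_s}$ and interchange the finite sums. For $t\ge 2$ this, together with your shape check (order $b+t+s-4$), is complete and correct.

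The step that fails is your claim that the case $t=1$ is ``immediate''. In fact the identity is false there. When $\mathcal{T}=\mathbf{x}$ is a vector, the last coordinate of $\mathcal{B}\cdot\mathbf{x}$ is the $(b-1)$-th coordinate of $\mathcal{B}$. So the left side contracts the first index of $\mathcal{S}$ with the $(b-1)$-th index of $\mathcal{B}$. The right side instead contracts the last index of $\mathcal{B}$ with the \emph{second} index of $\mathcal{S}$.

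The hypotheses do not even make the left side well defined, since that requires $l_{b-1}=m_1$. When everything is defined, the two sides differ. For $b=s=2$ they are $S^{\intercal}B\mathbf{x}$ and $BS^{\intercal}\mathbf{x}$. Take $2\times 2$ matrices $B=S=E_{12}$ (a single entry $1$ in position $(1,2)$) and $\mathbf{x}=\mathbf{e}_1$. Then the left side is $\mathbf{0}$ and the right side is $\mathbf{e}_1$.

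So the proposition holds only under the extra hypothesis $t\ge 2$. The paper's proof uses this tacitly: it writes $\mathcal{T}_{kh_b\ldots h_{b+t-3}h}$ with two distinct contracted coordinates of $\mathcal{T}$. You should state that restriction rather than assert the degenerate case.

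Relatedly, your remark that $b\ge 2$ is what keeps the bookkeeping honest misplaces the issue. For $t\ge 2$ your double sum works verbatim when $b=1$ as well. The degenerate factor that matters is the middle one, $\mathcal{T}$.
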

	\begin{proof}
		Let us assume that $l_b = n_1=n$ and $n_t = m_1=m.$ By definition,
		\begin{align*}
			((\mathcal{B} \cdot \mathcal{T}) \cdot \mathcal{S})_{h_1\ldots h_{b+t+s-3}}
			&=  \sum\limits_{h=1}^m \left( \sum\limits_{k=1}^n \mathcal{B}_{h_1 \ldots h_{b-1}k} \mathcal{T}_{k h_b \ldots h_{b+t-3}h}  \right) \mathcal{S}_{h h_{b+t-2} \ldots h_{b+t+s-4}} \\
			&= \sum\limits_{k=1}^n \mathcal{B}_{h_1 \ldots h_{b-1}k} \left( \sum\limits_{h=1}^m \mathcal{T}_{k h_b \ldots h_{b+t-3}h}   \mathcal{S}_{h h_{b+t-2} \ldots h_{b+t+s-4}} \right)
			= (\mathcal{B} \cdot (\mathcal{T} \cdot \mathcal{S}))_{h_1\ldots h_{b+t+s-3}}.
		\end{align*}
	\end{proof}

	For an order $t \geq 2$ hypermatrix, $\mathcal{T}$ of dimension $n_1 \times n_2 \times \ldots \times n_t$, we can obtain an associated linear transformation, $\mathcal{F}_{\mathcal{T}}: \mathbb{C}^{n_t} \longrightarrow \mathbb{C}^{n_1 \times n_2 \times \ldots \times n_{t-1}}$ as, \begin{equation*} \mathcal{F}_{\mathcal{T}}(\mathbf{x}) := \mathcal{T} \cdot \mathbf{x} = x_1 \mathcal{T}^{(1)} + \cdots + x_{n_t} \mathcal{T}^{(n_t)}, \end{equation*} where $\mathcal{T}^{(j)}, 1 \leq j \leq n_t $ is the $j$-th slice of $\mathcal{T}$ along the $t$-th coordinate.  When $t=1$, $\mathcal{T}$ is a vector of dimension $n$; call it $\mathbf{y}$. Then $\mathcal{F}_{\mathbf{y}}$ is a linear mapping from $\mathbb{C}^n$ to $\mathbb{C}$ defined as $\mathcal{F}_{\mathbf{y}}(\mathbf{x}) := \mathbf{y} \cdot \mathbf{x} = \sum_{i=1}^n y_i x_i. $ Also, when $t=0$, $\mathcal{T}$ is a scalar, say $\alpha$. Then, $\mathcal{F}_{\alpha}$ is a linear transformation from $\mathbb{C}^n$ to $\mathbb{C}$ that is defined as $\mathcal{F}_{\alpha}(\mathbf{x}) := \alpha \cdot \mathbf{x} = \alpha \mathbf{x}.$ In fact, for any $\alpha \in \mathbb{C},$ $\alpha \cdot \mathcal{T}$ is defined to be the entry-wise product of $\alpha$ and $\mathcal{T}$, denoted by $\alpha \mathcal{T}. $ The product of two vectors $\mathbf{x}$ and $\mathbf{y}$ of dimension $n$ is well defined, that is, the dot product $\mathbf{x} \cdot \mathbf{y} = \sum_{i=1}^n x_iy_i$.
	
	Suppose that $\mathbf{T}_t$ denotes the collection of all square hypermatrices of order $t$ and dimension $n$. Let $\mathbf{T} := \bigcup_{t \in (\mathbb{Z}_+ \cup \{0\})} \mathbf{T}_t$ be the collection of all square hypermatrices of dimension $n$ (which properly contain $\mathbb{C}$ and $\mathbf{C}^n$ also). For $\alpha \in \mathbb{C} \subset \mathbf{T}$, we define $\alpha \cdot \mathcal{T}=\mathcal{T} \cdot \alpha := \alpha \mathcal{T}$, which is identical to the scalar multiplication. Note that $(\mathbf{T}, \cdot)$ forms a monoid with $1 \in \mathbb{C}$ as the identity element. We define a product of two linear transformations $\mathcal{F}_{\mathcal{T}}: \mathbb{C}^{n_t} \longrightarrow \mathbb{C}^{n_1 \times \ldots \times n_{t-1}} $ and $\mathcal{F}_{\mathcal{S}}: \mathbb{C}^{m_s} \longrightarrow \mathbb{C}^{m_1 \times \ldots \times m_{s-1}}$ as $\mathcal{F}_{\mathcal{T}} \circ \mathcal{F}_{\mathcal{S}} := \mathcal{F}_{\mathcal{T} \cdot \mathcal{S}},$ which is a map from $\mathbb{C}^{m_s}$ to $\mathbb{C}^{n_1 \times \ldots \times n_{t-1} \times m_2 \times \ldots m_{s-1}}.$ Again, note that $(\{\mathcal{F}_{\mathcal{T}}: \mathcal{T} \in \mathbf{T} \}, \circ)$ forms a monoid with $\mathcal{F}_1$ as the identity element. 
	
	Similar to the matrix case, we make use of the transpose of a hypermatrix with respect to a permutation to obtain the universal definition for the product of two hypermatrices. 
	
	\begin{definition}\cite{pan2014tensor}\label{hypermatrix_transpose}
		Consider an order $t$ hypermatrix $\mathcal{T}$ of dimension $n_1 \times \ldots \times n_t$. The transpose of $\mathcal{T}$ with respect to a permutation $() \neq \sigma \in S_t$, is a hypermatrix $\mathcal{S}$ of dimension  $n_{\sigma(1)} \times \ldots \times n_{\sigma(t)}$, where $\mathcal{S}_{k_{\sigma(1)} \ldots k_{\sigma(t)}} = \mathcal{T}_{k_{1}\ldots  k_{t}},$ and we denote this hypermatrix by $\mathcal{T}^{\sigma}$. 
	\end{definition}
	
	If $t=2$, then the only transpose possible is corresponding to the permutation $(1 ~ 2) \in S_2$, which is the usual matrix transpose. Hence a hypermatrix of order $t$ can have $t!-1$ transposes. Further properties and applications of this generalization of matrix transpose can be found in \cite{pan2014tensor}.
	
	\begin{definition}\label{product_star}
		Let $\mathcal{T}$ be an order $t$ hypermatrix of dimension $n_1 \times \ldots \times n_t$, and $\mathcal{S}$ be an order $s$ hypermatrix of dimension $m_1 \times \ldots \times m_s$. Suppose that $n_i = m_j$ (say both equal to $n$), then the multiplication of $\mathcal{T}$ along its $i$-th coordinate to $B$ along its $j$-th coordinate results in an order $t+s-2$ hypermatrix $\mathcal{N} := \mathcal{T} \substack{^i * ^j} \mathcal{S}$ of dimension $n_1 \times \ldots \times n_{i-1} \times n_{i+1} \times \ldots \times n_t \times m_1 \times \ldots \times m_{j-1} \times m_{j+1} \times \ldots \times m_s$ whose $(k_1, \ldots, k_{t+s-2})$-th entry is given by 
		\begin{equation*}
			\mathcal{N}_{k_1 \ldots k_{t+s-2}} = \sum\limits_{k=1}^n \mathcal{T}_{k_1 \ldots k_{i-1} k k_{i} \ldots k_{t-1}} \mathcal{S}_{k_{t} \ldots k_{t+j-2}k k_{t+j-1} \ldots k_{t+s-2}},
		\end{equation*}
		where 
		$k_p \in 
		\begin{cases} [n_p] & \text{ if } 
			1 \leq p \leq i-1,\\
			[n_{p+1}] & \text{ if }  i \leq p \leq t-1,
		\end{cases} $ 
		and 
		$k_q \in 
		\begin{cases} 
			[m_{q-t+1}] & \text{ if }  t \leq q \leq t+j-2,\\
			[m_{q-t+2}] & \text{ if } t+j-1 \leq q \leq t+s-2.
		\end{cases} $
	\end{definition}

	\begin{remark}
		Definition \ref{product_dot} can be viewed as the \emph{standard product} of two hypermatrices, since the product of two hypermatrices along any of their coordinates can be reduced to Definition \ref{product_dot} using Definition \ref{hypermatrix_transpose}. That is, for an order $t$ hypermatrix $\mathcal{T}$ of dimension $n_1 \times \ldots \times n_{t}$, and an order $s$ hypermatrix $\mathcal{S}$ of dimension $m_1 \times \ldots \times m_s$. If $n_i=m_j$, then 
		\begin{equation*}           
			\mathcal{T} \substack{^i * ^j} \mathcal{S} =\mathcal{T}^{\sigma_i} \cdot \mathcal{S}^{\pi_j}, 
		\end{equation*}       
		where $\sigma_i = (i ~ t) \in S_t$ and $\pi_j = (1 ~ j) \in S_s.$
	\end{remark}

	Under the assumption that the entries of the hypermatrices are from the field, there are totally $ts$ distinct modes of products possible between a hypermatrix of order $t$ and a hypermatrix of order $s$.

	\subsection{Main Eigenvalues of a Hypermatrix}
	
	In this section, we define the main/non-main eigenvalues of the hypermatrix, and we obtain some main/non-main eigenvalues of the hypermatrices associated with the hypergraph.
	
	\begin{definition}
		Suppose that $\mathcal{M}$ is an $r$-uniform, dimension $n$ hypermatrix. We call an eigenvalue $\mu$ of a hypermatrix $\mathcal{M}$, a non-main eigenvalue if there exist an eigenvector of $\mathbf{x}$ of $\mathcal{M}$ associated with $\mu$ such that 
		\begin{equation}\label{non-main_condition}
			\sum\limits_{S \subset [n], |S|=r-1} \mathbf{x}^S = 0.
		\end{equation}
		In other words, $\mathbf{x}$ is called a non-main eigenvector.
		Otherwise (if there does not exist such $\mathbf{x}$) it is called a main eigenvalue of $\mathcal{M}$. 
	\end{definition}
	
	\begin{remark}
		Alternatively, an eigenvector $\mathbf{x}$ of an order $r \geq 2$, dimension $n \geq 2$ hypermatrix is said to be non-main if $\mathcal{T} \mathbf{x}^{r-1} = 0$,
		where $\mathcal{T}$ is a $(0,1)$-adjacency hypermatrix of $(r-1)$-uniform complete hypergraph on $n$ vertices.
		
		\begin{itemize}
			\item In the trivial case, if $r=2$, then the adjacency hypermatrix of $1$-uniform hypergraph is all one vector of length $n$ (i.e., an order $1$, dimension $n$ hypermatrix).
			\item As a non-trivial case, if $r=3$, then $\mathcal{T}=J-I=A(K_n)$ is the adjacency matrix of the complete graph on $n$ vertices. Now, an eigenvector $\mathbf{x}$ of an order $3$, dimension $n$ hypermatrix is said to be a non-main eigenvector if $\mathbf{x}^\intercal \mathcal{T} \mathbf{x} = \mathbf{x}^\intercal J \mathbf{x} - \mathbf{x}^\intercal I \mathbf{x} =0. $
			\item  As mentioned earlier, for $r \geq 3$, $\mathcal{T} := (r-2)! \mathcal{A}(\mathcal{K}_n^{(r-1)})$.
		\end{itemize}
	\end{remark}
	
	For $r\geq 3$, one can note that there always exist $n$ non-main eigenvectors corresponding to the eigenvalue $0$ of any $r$-uniform hypergraph (canonical vectors are the non-main eigenvectors), and the spectral radius is always a main eigenvalue of any non-trivial $r$-uniform hypergraph. 
	
	\begin{proposition}
		Let $\mathcal{K}_n^{(r)}$ be the $r$-uniform ($r \geq 3$) complete hypergraph on the vertex set $[n]$, $n \geq r$.
		\begin{itemize}
			\item Suppose that  $(\lambda, \mathbf{y})$ is an eigenpair of $\mathcal{A}(\mathcal{K}_n^{(r)})$. Then, $\lambda \neq 0$ is a non-main eigenvalue if and only if $\sum_{i=1}^n y_i^{r-1}=0.$ 
			Furthermore,      $0$ is always a non-main eigenvalue of $\mathcal{A}(\mathcal{K}_n^{(r)})$.
			\item Suppose that  $(\mu, \mathbf{z})$ (resp. $(\beta, \mathbf{x})$) is an eigenpair of $\mathcal{L}(\mathcal{K}_n^{(r)})$ (resp. $\mathcal{Q}(\mathcal{K}_n^{(r)})$). Then, $\mu \neq \binom{n-1}{r-1}$ is a non-main eigenvalue of $\mathcal{L}(\mathcal{K}_n^{(r)})$ (resp. $\mathcal{Q}(\mathcal{K}_n^{(r)})$) if and only if $\sum_{i=1}^n z_i^{r-1}=0$ (resp. $\sum_{i=1}^n x_i^{r-1}=0$). 
			Furthermore, $\binom{n-1}{r-1}$ is always a non-main eigenvalue of $\mathcal{L}(\mathcal{K}_n^{(r)})$ and $\mathcal{Q}(\mathcal{K}_n^{(r)})$. 
		\end{itemize}
	\end{proposition}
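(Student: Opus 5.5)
The plan is to reduce everything to one summation identity for the complete hypergraph. For $\mathbf{y}\in\mathbb{C}^n$ write $e_{k}(\mathbf{y})$ for the $k$-th elementary symmetric polynomial in $y_1,\ldots,y_n$, and $\mathbf{y}_{-i}$ for $\mathbf{y}$ with the $i$-th coordinate deleted. Since every $r$-subset of $[n]$ is a hyperedge of $\mathcal{K}_n^{(r)}$ and each entry equals $\frac{1}{(r-1)!}$, the $(r-1)!$ orderings of each $(r-1)$-subset cancel this factor. Hence
$$(\mathcal{A}(\mathcal{K}_n^{(r)})\mathbf{y}^{r-1})_i=e_{r-1}(\mathbf{y}_{-i}).$$
The non-main condition \eqref{non-main_condition} reads $e_{r-1}(\mathbf{y})=0$. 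Summing over $i$, each $(r-1)$-subset $S$ is counted once for every $i\notin S$, that is, $n-r+1$ times. This gives
$$\sum_{i=1}^n(\mathcal{A}\mathbf{y}^{r-1})_i=(n-r+1)\,e_{r-1}(\mathbf{y}).$$

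For the first item, let $(\lambda,\mathbf{y})$ be an eigenpair. Summing the coordinates of $\mathcal{A}\mathbf{y}^{r-1}=\lambda\mathbf{y}^{[r-1]}$ and using the identity gives
$$(n-r+1)\,e_{r-1}(\mathbf{y})=\lambda\sum_{i=1}^n y_i^{r-1}.$$
Since $n\ge r$, the factor $n-r+1$ is nonzero. So when $\lambda\neq0$, the vector $\mathbf{y}$ satisfies the non-main condition exactly when $\sum_i y_i^{r-1}=0$, which is the claimed equivalence, read at the level of the given eigenvector as in the statement.

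For the claim about $0$, take a canonical vector $\mathbf{e}_j$. Every monomial in $e_{r-1}(\mathbf{e}_j)$ and in $e_{r-1}((\mathbf{e}_j)_{-i})$ is a product of $r-1\ge2$ distinct coordinates, so all of them vanish. Thus $\mathcal{A}\mathbf{e}_j^{r-1}=\mathbf{0}=0\cdot\mathbf{e}_j^{[r-1]}$, and $\mathbf{e}_j$ is a non-main eigenvector for $0$.

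For the second item, $\mathcal{K}_n^{(r)}$ is regular of degree $d=\binom{n-1}{r-1}$, so $\mathcal{D}\mathbf{z}^{r-1}=d\,\mathbf{z}^{[r-1]}$. Then $\mathcal{L}\mathbf{z}^{r-1}=\mu\mathbf{z}^{[r-1]}$ is equivalent to $\mathcal{A}\mathbf{z}^{r-1}=(d-\mu)\mathbf{z}^{[r-1]}$. Similarly, $\mathcal{Q}\mathbf{x}^{r-1}=\beta\mathbf{x}^{[r-1]}$ is equivalent to $\mathcal{A}\mathbf{x}^{r-1}=(\beta-d)\mathbf{x}^{[r-1]}$. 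Eigenvectors are therefore shared, and the non-main condition depends only on the vector. Applying the first item with $\lambda=d-\mu\neq0$ (respectively $\beta-d\neq0$) gives the equivalences. The canonical vectors are eigenvectors of $\mathcal{L}$ and $\mathcal{Q}$ for $d$ and satisfy the non-main condition, so $d$ is non-main for both.

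The only delicate point I expect is the counting step, namely that each $(r-1)$-subset is counted exactly $n-r+1$ times. That is also where the hypotheses are used: $n\ge r$ makes the factor nonzero, and $r\ge3$ makes the canonical vectors non-main.
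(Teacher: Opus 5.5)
Your proposal is correct and follows the paper's proof: both sum the eigen-equations over $i$ and count each $(r-1)$-subset $n-r+1$ times to get $\lambda\sum_i y_i^{r-1}=(n-r+1)\sum_{|S|=r-1}\mathbf{y}^S$. The only differences are minor: for $0$ you exhibit the canonical vectors as non-main eigenvectors, whereas the paper reads off from the same identity with $\lambda=0$ that every eigenvector for $0$ is non-main; and you make the paper's ``similar argument'' for $\mathcal{L}$ and $\mathcal{Q}$ explicit by the regularity shift to $\mathcal{A}$ with eigenvalue $d-\mu$ or $\beta-d$.
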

	\begin{proof}
		By using the eigen-equation, if $(\lambda, \mathbf{y})$ forms an eigenpair for $\mathcal{A}=\mathcal{A}(\mathcal{K}_n^{(r)})$, then $\lambda y_i^{r-1} = \sum_{e \in \mathcal{E}_i(\mathcal{H})} \mathbf{y}^{e \setminus \{i\}}$, for each $i, ~1 \leq i \leq n$. 
		If $\lambda=0$, then $$0= \lambda \sum\limits_{i=1}^n y_i^{r-1} = \sum_{i=1}^n \sum_{e \in \mathcal{E}_i(\mathcal{H})} \mathbf{y}^{e \setminus \{i\}} = (n-r+1)\sum\limits_{S \subseteq [n], |S|=r-1} \mathbf{y}^S.$$ Since $n \geq r$, we have $\mathbf{y}$ is a non-main eigenvector.
		If $\lambda \neq 0,$ then by the definition, $\mathbf{y}$ is a  non-main eigenvector if and only if $\sum_{i=1}^n y_i^{r-1}=0.$ 
		
		Similar argument using the eigen-equations of $\mathcal{L}(\mathcal{K}_n^{(r)})$ and $\mathcal{Q}(\mathcal{K}_n^{(r)})$ can be used to show that $\binom{n-1}{r-1}$ is a non-main eigenvalue of the both.
	\end{proof}

	\section{Eigenvalues of Hypergraph Products}\label{sec_3}
	
	This section is dedicated to the computation of the adjacency, Laplacian and signless-Laplacian eigenvalues of the join, Kronecker product, and corona product of two hypergraphs in terms of the eigenvalues of the underlying graphs.
	
	\subsection{Join}
	
	\begin{definition}\cite{shetty2025forgotten}
		Let $\mathcal{H}_1$ and $\mathcal{H}_2$ be two $r$-uniform hypergraphs on the vertex sets $\mathcal{V}(\mathcal{H}_1)$ and $\mathcal{V}(\mathcal{H}_2)$, with edge sets $\mathcal{E}(\mathcal{H}_1)$ and $\mathcal{E}(\mathcal{H}_2),$ respectively. Then ($r$-uniform) join of $\mathcal{H}_1$ and $\mathcal{H}_2$ is denoted by $\mathcal{H}_1 \vee \mathcal{H}_2$ is an $r$-uniform hypergraph on the vertex set $\mathcal{V}(\mathcal{H}_1 \vee \mathcal{H}_2) := \mathcal{V}(\mathcal{H}_1) \cup \mathcal{V}(\mathcal{H}_2)$, and the edge set is given by $\mathcal{E}(\mathcal{H}_1 \vee \mathcal{H}_2):=\mathcal{E}(\mathcal{H}_1) \cup \mathcal{E}(\mathcal{H}_1) \cup \mathcal{E}^*$, where 
		$$\mathcal{E}^* = \{ e \subseteq \mathcal{V}(\mathcal{H}_1 \vee  \mathcal{H}_2) \vert e \cap  \mathcal{V}(\mathcal{H}_i) \neq \emptyset, i= 1,2, \text{ and } |e|=r \}.$$
	\end{definition}

	\begin{theorem}
		Let $\mathcal{H}_1$ and $\mathcal{H}_2$ be two $r$-uniform hypergraphs of order $n_1$ and $n_2$, respectively. Suppose that $\mathcal{H}:=\mathcal{H}_1 \vee \mathcal{H}_2$ denotes the join of $\mathcal{H}_1$ and $\mathcal{H}_2$. 
		If $\eta$ is a non-main eigenvalue of $\mathcal{L}(\mathcal{H}_1)$ and $\zeta$ is a non-main eigenvalue of $\mathcal{L}(\mathcal{H}_2)$, then $\eta$ and $\zeta$ are the non-main eigenvalues of $\mathcal{L}(\mathcal{H}).$
		Furthermore, if $\epsilon_i \neq 0, 1 \leq i \leq 2r-2$ are the solutions of the equation
		\begin{equation}
			\sum\limits_{j=1}^{r-1} \binom{n_2}{j}\binom{n_1}{r-1-j} (1-\epsilon^{-j}) - \sum\limits_{j=1}^{r-1} \binom{n_1}{j}\binom{n_2}{r-1-j} (1-\epsilon^{j}) = 0, \label{eqn_join_laplacian}
		\end{equation} then $P_i:= \sum\limits_{j=1}^{r-1} \binom{n_2}{j}\binom{n_1}{r-1-j} (1-\epsilon_i^{-j})=\sum\limits_{j=1}^{r-1} \binom{n_1}{j}\binom{n_2}{r-1-j} (1-\epsilon_i^{j})$ are the eigenvalues of $\mathcal{L}(\mathcal{H}).$
	\end{theorem}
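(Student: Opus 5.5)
We work directly with the eigen-equations of $\mathcal{L}(\mathcal{H})$, writing a vector on $\mathcal{V}(\mathcal{H})$ as $(\mathbf{x},\mathbf{y})$ with $\mathbf{x}$ supported on $\mathcal{V}(\mathcal{H}_1)$ and $\mathbf{y}$ on $\mathcal{V}(\mathcal{H}_2)$. For a vertex $i\in\mathcal{V}(\mathcal{H}_1)$, the hyperedges of $\mathcal{H}$ containing $i$ are of two kinds: those of $\mathcal{H}_1$, and those of $\mathcal{E}^*$. The latter consist of $i$ together with $r-1-j$ further vertices of $\mathcal{H}_1$ and $j\geq 1$ vertices of $\mathcal{H}_2$. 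Hence
\[
d_{\mathcal{H}}(i)=d_{\mathcal{H}_1}(i)+c_1, \qquad c_1:=\sum_{j=1}^{r-1}\binom{n_2}{j}\binom{n_1-1}{r-1-j},
\]
and the $i$-th coordinate of $\mathcal{A}(\mathcal{H})(\mathbf{x},\mathbf{y})^{r-1}$ equals $(\mathcal{A}(\mathcal{H}_1)\mathbf{x}^{r-1})_i$ plus a sum $\sum_{j\ge1} e_{r-1-j}(\mathbf{x}\text{ off }i)\,e_j(\mathbf{y})$ of products of elementary symmetric polynomials. The same description holds symmetrically for vertices of $\mathcal{H}_2$.

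For the first assertion, take a non-main eigenvector $\mathbf{x}$ of $\mathcal{L}(\mathcal{H}_1)$ for $\eta$ and set $\mathbf{y}=\mathbf{0}$. For $i\in\mathcal{V}(\mathcal{H}_2)$, every cross term carries the factor $e_j(\mathbf{y})$ with $j\ge1$, so it vanishes; the diagonal term also vanishes since $y_i=0$. For $i\in\mathcal{V}(\mathcal{H}_1)$, only the $j=0$ term involves $\mathbf{x}$ alone, and it is absent because $\mathcal{E}^*$ requires $j\ge1$. What remains is the degree shift $c_1x_i^{r-1}$. I expect this to be the main obstacle: the shift does not vanish by itself, so one must either use the non-main condition $\sum_{|S|=r-1}\mathbf{x}^S=0$ to absorb it, or check whether the paper's convention effectively makes the eigenvalue $\eta+c_1$. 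I would first test the small case $r=3$ to see which normalisation makes the claim hold as stated, and then interpret ``$\eta$'' accordingly. The non-main condition should enter when the equations are summed or restricted to the relevant span. The same argument, with the roles of $\mathcal{H}_1$ and $\mathcal{H}_2$ swapped, handles $\zeta$.

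For the second assertion, try vectors constant on each part: $\mathbf{x}=\mathbf{1}$ on $\mathcal{V}(\mathcal{H}_1)$ and $\mathbf{y}=\epsilon\mathbf{1}$ on $\mathcal{V}(\mathcal{H}_2)$ with $\epsilon\neq0$. Because the Laplacian annihilates constants internally, the $\mathcal{H}_1$-part contributes nothing, and only the cross edges remain. At $i\in\mathcal{V}(\mathcal{H}_1)$ the equation becomes
\[
\sum_{j}\binom{n_2}{j}\binom{n_1-1}{r-1-j}\,(1-\epsilon^{j})=P.
\]
At a vertex of $\mathcal{H}_2$, after dividing by $\epsilon^{r-1}$, it becomes
\[
\sum_{j}\binom{n_1}{j}\binom{n_2-1}{r-1-j}\,(1-\epsilon^{-j})=P.
\]
Equating the two gives a polynomial equation in $\epsilon$ of degree $2r-2$ after clearing $\epsilon^{r-1}$, and each nonzero root yields the eigenvalue $P_i$. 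I would then match these expressions to equation \eqref{eqn_join_laplacian}: the binomial indices there are written with $n_1,n_2$ rather than $n_1-1,n_2-1$, and the sides appear swapped. This bookkeeping, namely reconciling exactly how the cross-edge counts are written, is the only computation that remains, apart from the degree-shift issue identified above.
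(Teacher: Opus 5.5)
\textbf{Verdict.} Your proposal does not reach a proof, and no completion can: the two points you flag as obstacles are genuine, and they make the statement false as written. Neither ``absorbing the shift'' nor ``reconciling the counts'' can succeed.

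\textbf{First assertion.} Your computation at $i\in\mathcal{V}(\mathcal{H}_1)$ is correct. The vector $\mathbf{w}=[\mathbf{x};\mathbf{0}]$ gives $(\mathcal{L}(\mathcal{H})\mathbf{w}^{r-1})_i=(\eta+c_1)x_i^{r-1}$ with $c_1=\binom{n_1+n_2-1}{r-1}-\binom{n_1-1}{r-1}$. A single scalar identity $\sum_{|S|=r-1}\mathbf{x}^S=0$ cannot cancel the pointwise term $c_1x_i^{r-1}$.

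What you have wrong is where the non-main hypothesis is used. At $k\in\mathcal{V}(\mathcal{H}_2)$ it is false that every cross term carries a factor from $\mathbf{y}$. The edges $\{k\}\cup S$ with $S\subseteq\mathcal{V}(\mathcal{H}_1)$, $|S|=r-1$, contribute $-\sum_S\mathbf{x}^S$. This is the $j=r-1$ term of your symmetric formula, where the $\mathbf{y}$-factor is $e_0=1$. That coordinate vanishes exactly because $\mathbf{x}$ is non-main.

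The true conclusion is therefore that $\eta+c_1$ is a non-main eigenvalue of $\mathcal{L}(\mathcal{H})$. Symmetrically, so is $\zeta+c_2$ with $c_2=\binom{n_1+n_2-1}{r-1}-\binom{n_2-1}{r-1}$. For graphs this is the familiar shift by $n_2$ (resp.\ $n_1$).

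The unshifted claim already fails for $r=2$. The non-main eigenvalue $2$ of $L(K_2)$ is not in the spectrum $\{0,4,4,4\}$ of $L(K_4)$, where $K_4=K_2\vee K_2$.

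It also fails for $r=3$. Take $\mathcal{H}_1$ a single edge on $\{1,2,3\}$ and $\mathcal{H}_2$ an isolated vertex, so $\mathcal{H}=\mathcal{K}_4^{(3)}$. Here $\zeta=0$ is non-main for the isolated vertex, since the defining sum is empty. Also $\eta=1$ is non-main for $\mathcal{L}(\mathcal{H}_1)$ via the eigenvector $(1,0,0)$, and $(1,0,0,0)$ has eigenvalue $3=1+c_1$. The eigen-equations of $\mathcal{L}(\mathcal{K}_4^{(3)})$ read $(2-\mu)w_i^2+s\,w_i-e_2(\mathbf{w})=0$ with $s=\sum_i w_i$. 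For $\mu=1$, every $w_i$ is a root of $t^2+st-e_2(\mathbf{w})$. Checking how the four coordinates can split between the two roots forces $\mathbf{w}=\mathbf{0}$, so $1$ is not an eigenvalue at all.

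\textbf{Second assertion.} The mismatch in binomials is not notation either. Your counts $\binom{n_2}{j}\binom{n_1-1}{r-1-j}$ and $\binom{n_1}{j}\binom{n_2-1}{r-1-j}$ are the correct ones. The $r-1-j$ further vertices of $\mathcal{H}_1$ in a cross edge through $i\in\mathcal{V}(\mathcal{H}_1)$ are chosen among the other $n_1-1$ vertices, not all $n_1$. The ``swapped sides'' are only the substitution $\epsilon\mapsto\epsilon^{-1}$ relative to the paper's test vector $[\epsilon\mathbf{1}_{n_1};\mathbf{1}_{n_2}]$, which is harmless.

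With $\binom{n_1}{\cdot}$ and $\binom{n_2}{\cdot}$ as stated, spurious values appear as soon as $r\ge 3$. For $r=3$, $n_1=2$, $n_2=1$, the join is a single edge. Its Laplacian spectrum is $\{0,1,1-\omega,1-\omega^2\}$, with $\omega$ a primitive cube root of unity. The stated equation becomes $(\epsilon+2)(\epsilon^2-1)=0$ and yields $P=3$ and $P=4$. Your version gives $\epsilon^3=1$ and $P\in\{0,1-\omega,1-\omega^2\}$, as it should.

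\textbf{Comparison with the paper.} The paper's own proof uses exactly your two families of test vectors but merely asserts the verification. It therefore misses the degree shift and miscounts the cross edges. Your analysis should end in the corrected theorem (with $\eta+c_1$, $\zeta+c_2$, and your binomials), not in a reading under which the stated one holds.
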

	\begin{proof}
		Without loss of generality, assume that $[n_1+n_2]$ be the vertex set of $\mathcal{H}_1 \vee \mathcal{H}_2$, and let $[n_1]$ be the vertices corresponding to $\mathcal{V}(\mathcal{H}_1)$ and $[n_1 + n_2] \setminus [n_1]$ be the vertices corresponding to $\mathcal{V}(\mathcal{H}_2)$. 
		Suppose that $\eta$ is a non-main eigenvalue of $\mathcal{L}(\mathcal{H}_1)$ corresponding to a non-main eigenvector $\mathbf{x}$, and $\zeta$ is a non-main eigenvalue of $\mathcal{L}(\mathcal{H}_2)$ corresponding to a non-main eigenvector $\mathbf{z}$. 
		Then by using the definition of non-main eigenvector, it is direct to verify that $\eta$ is a non-main eigenvalue of $\mathcal{H}_1 \vee \mathcal{H}_2$ and corresponding non-main eigenvector is 
		$\left[\begin{matrix}
			\mathbf{x}\\
			\mathbf{0}
		\end{matrix} \right]$.
		Similarly,  $\zeta$ is a non-main eigenvalue of $\mathcal{H}_1 \vee \mathcal{H}_2$ and corresponding non-main eigenvector is 
		$\left[\begin{matrix}
			\mathbf{0}\\
			\mathbf{z}
		\end{matrix} \right]$.
		Let us define $\mathbf{y}=\mathbf{y}^{(i)}:=
		\left[ \begin{matrix}
			\epsilon_i \mathbf{1}_{n_1} \\
			\mathbf{1}_{n_2}
		\end{matrix} \right],$ where $\epsilon_i$ is a solution of the Equation \ref{eqn_join_laplacian}, and $P_i := \sum\limits_{j=1}^{r-1} \binom{n_2}{j}\binom{n_1}{r-1-j} (1-\epsilon_i^{-j})$. 
		Then for any $j \in [n_1],$ we have by eigen-equation
		\begin{align*}
			(\mathcal{L}(\mathcal{H}) \mathbf{y}^{r-1})_j - P_i y_j^{r-1} 
			&= d_{\mathcal{H}}(j) y_j^{r-1} - \sum\limits_{e \in \mathcal{E}_j(\mathcal{H})} \mathbf{y}^{e \setminus \{j \}}  - P_i y_j^{r-1}\\
			&=\left(\sum\limits_{j=1}^{r-1} \binom{n_2}{j}\binom{n_1}{r-1-j} - P_i \right)\epsilon_i^{r-1} - \sum\limits_{j=1}^{r-1} \binom{n_2}{j} \binom{n_1}{r-1-j} \epsilon_i^{r-1-j} = 0. 
		\end{align*}
		Using Equation \ref{eqn_join_laplacian}, let us assign $P_i:=\sum\limits_{j=1}^{r-1} \binom{n_1}{j}\binom{n_2}{r-1-j} (1-\epsilon_i^{j}).$ Hence, for any $k \in [n_1+n_2]\setminus [n_1]$, we have
		\begin{align*}
			(\mathcal{L}(\mathcal{H})\mathbf{y}^{r-1})_k - P_iy_k^{r-1} &= d_{\mathcal{H}}(k) y_k^{r-1} - \sum\limits_{ e \in \mathcal{E}_k(\mathcal{H})} \mathbf{y}^{e \setminus \{k \} } - P_i y_k^{r-1} \\
			&=\sum\limits_{j=1}^{r-1} \binom{n_1}{j} \binom{n_2}{r-1-j} - P_i - \sum\limits_{j=1}^{r-1} \binom{n_1}{j} \binom{n_2}{r-1-j} \epsilon_i^j=0. 
		\end{align*}
	\end{proof}

	
	\begin{theorem}
		Given two $r$-uniform hypergraphs  $\mathcal{H}_1$ and $\mathcal{H}_2$ of orders $n_1$ and $n_2$, respectively.
		Let $\mathcal{H}:=\mathcal{H}_1 \vee \mathcal{H}_2$ be the $r$-uniform join of $\mathcal{H}_1$ and $\mathcal{H}_2$.
		\begin{itemize}
			\item If  $\lambda$ is a non-main eigenvalue of $\mathcal{A}({\mathcal{H}_1})$ and $\mu$ is a non-main eigenvalue of $\mathcal{A}({\mathcal{H}_2}),$ then $\lambda$ and $\mu$ are the non-main eigenvalues of $\mathcal{A}(\mathcal{H})$.
			Moreover, suppose that $\mathcal{H}_1$ is $d_1$ regular, $\mathcal{H}_2$ is $d_2$ regular, and if  $ \epsilon_i \neq 0, 1 \leq i \leq 2r-2$ are the solutions of \begin{equation*}
				d_1-d_2 + \sum\limits_{j=1}^{r-1} \binom{n_2}{j}\binom{n_1}{r-1-j} \epsilon^{-j} -\sum\limits_{j=1}^{r-1} \binom{n_1}{j}\binom{n_2}{r-1-j} \epsilon^{j}=0,     \label{eqn_join_adjacency}    
			\end{equation*} 
			then $P_i:=d_1+ \sum\limits_{j=1}^{r-1} \binom{n_2}{j}\binom{n_1}{r-1-j} \epsilon_i^{-j} = 0 $ is an eigenvalue of $\mathcal{A}(\mathcal{H})$.\\
			Furthermore, if $\epsilon_i$ is positive, then $P_i$ is the spectral radius of $\mathcal{A}(\mathcal{H})$.
			\item If  $\theta$ is  non-main eigenvalue of $\mathcal{Q}({\mathcal{H}_1})$ and $\beta$ is a non-main eigenvalue of $\mathcal{Q}({\mathcal{H}_2}),$ then $\theta$ and $\beta$ are the non-main eigenvalues of $\mathcal{Q}(\mathcal{H})$. 
			Moreover, suppose that $\mathcal{H}_1$ is $d_1$ regular, $\mathcal{H}_2$ is $d_2$ regular, and if  $ \epsilon_i \neq 0, 1 \leq i \leq 2r-2$ are the solutions of 
			\begin{equation*}
				2(d_1-d_2) + \sum\limits_{j=1}^{r-1} \binom{n_2}{j}\binom{n_1}{r-1-j} (1+\epsilon^{-j}) -\sum\limits_{j=1}^{r-1} \binom{n_1}{j}\binom{n_2}{r-1-j} (1+\epsilon^{j})=0, \label{eqn_join_signless} 
			\end{equation*} then $Q_i:= 2d_1+ \sum\limits_{j=1}^{r-1} \binom{n_2}{j}\binom{n_1}{r-1-j} (1+\epsilon_i^{-j}) =  2d_1+ \sum\limits_{j=1}^{r-1} \binom{n_1}{j}\binom{n_2}{r-1-j} (1+\epsilon_i^{j})$ is an eigenvalue of $\mathcal{Q}(\mathcal{H})$.\\
			Furthermore, if $\epsilon_i$ is positive, then $Q_i$ is the spectral radius of $\mathcal{Q}(\mathcal{H})$.
		\end{itemize}  
	\end{theorem}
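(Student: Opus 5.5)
The plan is to mirror the proof of the preceding Laplacian join theorem, working with the eigen-equations of $\mathcal{A}(\mathcal{H})$ and $\mathcal{Q}(\mathcal{H})$ coordinate by coordinate. Label $[n_1]$ as $\mathcal{V}(\mathcal{H}_1)$ and $[n_1+n_2]\setminus[n_1]$ as $\mathcal{V}(\mathcal{H}_2)$.

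\emph{Non-main eigenvalues.} Let $\mathbf{x}$ be a non-main eigenvector of $\mathcal{A}(\mathcal{H}_1)$ for $\lambda$. I will test $\mathbf{y}=\left[\begin{smallmatrix}\mathbf{x}\\ \mathbf{0}\end{smallmatrix}\right]$.

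For $k>n_1$, every hyperedge through $k$ whose other vertices avoid $[n_1]$ contributes $0$. The remaining hyperedges are $k$ together with $r-1$ other vertices. The part in $[n_1]$ is a set $S$ with $|S|=j\ge 1$, and if $j<r-1$ the monomial still contains a zero coordinate from $\mathcal{V}(\mathcal{H}_2)$. Hence only $j=r-1$ survives, and $(\mathcal{A}\mathbf{y}^{r-1})_k=\sum_{|S|=r-1,\,S\subseteq[n_1]}\mathbf{x}^S=0$ by the non-main condition \eqref{non-main_condition}.

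For $i\le n_1$, edges of $\mathcal{E}^*$ through $i$ contain a vertex of $\mathcal{H}_2$ and so contribute $0$. The edges of $\mathcal{H}_1$ give $\lambda x_i^{r-1}$. The non-main sum for $\mathbf{y}$ equals that for $\mathbf{x}$, which is $0$.

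For $\mathcal{Q}$ there is one extra point. The diagonal term at $k>n_1$ vanishes since $y_k=0$. At $i\le n_1$, however, the degree becomes $d_{\mathcal{H}}(i)=d_{\mathcal{H}_1}(i)+c_1$, where $c_1=\sum_{j\ge1}\binom{n_2}{j}\binom{n_1-1}{r-1-j}$. This shifts the eigenvalue by $c_1$, unless I restrict to regular cases or the statement implicitly absorbs the shift. I will flag this as something to check rather than assert, and note that the analogous Laplacian claim has the same issue. The statement for $\mathcal{H}_2$ is symmetric.

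\emph{The roots $\epsilon_i$.} Take $\mathbf{y}=\left[\begin{smallmatrix}\epsilon\mathbf{1}_{n_1}\\ \mathbf{1}_{n_2}\end{smallmatrix}\right]$.

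For $i\le n_1$, the edges of $\mathcal{H}_1$ give $d_1\epsilon^{r-1}$. The edges of $\mathcal{E}^*$ with $j$ vertices in $\mathcal{H}_2$ give $\binom{n_2}{j}\binom{n_1}{r-1-j}\epsilon^{r-1-j}$; the count uses the paper's binomial convention. Dividing by $\epsilon^{r-1}$, the eigenvalue must equal $P=d_1+\sum_j\binom{n_2}{j}\binom{n_1}{r-1-j}\epsilon^{-j}$.

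For $k>n_1$, the analogous count gives $P=d_2+\sum_j\binom{n_1}{j}\binom{n_2}{r-1-j}\epsilon^{j}$. Equating the two expressions is exactly the displayed equation, so every nonzero root yields an eigenvalue $P_i$. The "$=0$" in the statement is a typo for the definition of $P_i$.

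For $\mathcal{Q}$, add the diagonal degrees. Vertex $i\le n_1$ has degree $d_1+\sum_j\binom{n_2}{j}\binom{n_1}{r-1-j}$, and similarly on the $\mathcal{H}_2$ side. The same computation then produces the $(1+\epsilon^{\pm j})$ forms.

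\emph{Spectral radius.} If $\epsilon_i>0$, then $\mathbf{y}$ is a positive eigenvector. $\mathcal{H}$ is connected because every pair of vertices lies in a common $\mathcal{E}^*$ edge or is joined through one, so $\mathcal{A}(\mathcal{H})$ and $\mathcal{Q}(\mathcal{H})$ are weakly irreducible nonnegative tensors. The Perron--Frobenius theorem for such tensors (Chang--Pearson--Zhang, Friedland--Gaubert--Han) says a positive eigenvector belongs to the spectral radius, which gives the final claim.

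The main obstacle is the counting of $\mathcal{E}^*$ edges through a vertex by the number of vertices on each side. In particular, the $n_1$ versus $n_1-1$ choice in the binomials must be reconciled with the formulas as stated, and the same count decides whether the $\mathcal{Q}$ non-main claim needs a degree shift.
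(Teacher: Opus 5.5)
Your route is the paper's own. You use the test vectors $[\mathbf{x};\mathbf{0}]$, $[\mathbf{0};\mathbf{z}]$ and $[\epsilon\mathbf{1}_{n_1};\mathbf{1}_{n_2}]$, check the eigen-equations coordinatewise, and apply Perron--Frobenius for a positive root. Your adjacency non-main argument and the spectral-radius step are fine. The gap is that the two points you leave ``to check'' are decisive, and checking them shows the statement is false as written.

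First, your count ``using the paper's binomial convention'' is wrong. An edge of $\mathcal{E}^*$ through $i\in[n_1]$ with $j$ vertices in $\mathcal{V}(\mathcal{H}_2)$ has its other $r-1-j$ vertices in $[n_1]\setminus\{i\}$. So there are $\binom{n_2}{j}\binom{n_1-1}{r-1-j}$ such edges, and your own $c_1$ already uses $n_1-1$. Symmetrically, the count at $k>n_1$ is $\binom{n_1}{j}\binom{n_2-1}{r-1-j}$; no convention rescues $\binom{n_1}{r-1-j}$. For $r=2$ the two counts coincide, but for $r\geq 3$ roots of the displayed equation do not give eigenvectors. Take $\mathcal{H}_1,\mathcal{H}_2$ to be single $3$-edges, so $n_1=n_2=3$ and $d_1=d_2=1$. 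Then $\mathcal{H}=\mathcal{K}_6^{(3)}$, whose adjacency and signless-Laplacian spectral radii are $\binom{5}{2}=10$ and $20$. Yet $\epsilon=1$ is a positive root of both displayed equations and gives $P=1+9+3=13$ and $Q=26$. What your computation actually proves is a corrected statement:
replace $\binom{n_1}{r-1-j}$ by $\binom{n_1-1}{r-1-j}$ in the first sums, replace $\binom{n_2}{r-1-j}$ by $\binom{n_2-1}{r-1-j}$ in the second, and put $2d_2$ rather than $2d_1$ in front of the second expression for $Q_i$. The paper's proof writes the same incorrect binomials, so there is nothing there to reconcile against.

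Second, the shift you found for $\mathcal{Q}$ is real, and regularity does not remove it, since it comes from the join edges. The vector $[\mathbf{x};\mathbf{0}]$ is a non-main eigenvector of $\mathcal{Q}(\mathcal{H})$ for $\theta+c_1$, where $c_1=\binom{n_1+n_2-1}{r-1}-\binom{n_1-1}{r-1}$ is positive whenever $\mathcal{E}^*\neq\emptyset$. The value $\theta$ itself need not be an eigenvalue:
\begin{itemize}
\item For $r=2$, $K_2\vee K_1=K_3$. Here $0$ is a non-main eigenvalue of $Q(K_2)$ with eigenvector $(1,-1)^\intercal$, but $Q(K_3)=I+J$ has eigenvalues $4,1,1$, which include $0+c_1=1$.
\item For $r=3$, joining one isolated vertex to two isolated vertices gives a single edge. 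Then $0$ is a non-main $\mathcal{Q}$-eigenvalue of the one-vertex factor (its $\mathcal{Q}$ is the zero tensor and the non-main sum is empty), but not of the edge. Indeed, $x_1^2=-x_2x_3$, $x_2^2=-x_1x_3$, $x_3^2=-x_1x_2$ force $(x_1x_2x_3)^2=0$ and then $\mathbf{x}=\mathbf{0}$.
\end{itemize}
Likewise $\beta$ must be replaced by $\beta+\binom{n_1+n_2-1}{r-1}-\binom{n_2-1}{r-1}$. The paper covers the $\mathcal{Q}$ case with ``analogously'' and misses this; the preceding Laplacian theorem has the same defect. So rather than flagging these points, you should state the corrected claims, which your argument then proves verbatim.
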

	\begin{proof}
		Without loss of generality, $[n_1+n_2]$ be the vertex set of $\mathcal{H}=\mathcal{H}_1 \vee \mathcal{H}_2$ with
		$[n_1]$ be the vertices corresponding to $\mathcal{V}(\mathcal{H}_1)$, and $[n_1+n_2] \setminus [n_1]$ be the vertices corresponding to $\mathcal{V}(\mathcal{H}_2)$.
		Suppose that $\lambda$ and $\mu$ are the non-main eigenvalues of $\mathcal{A}(\mathcal{H}_1)$ and $\mathcal{A}(\mathcal{H}_2)$ with corresponding non-main eigenvectors $\mathbf{x}$ and $\mathbf{z}$, respectively. Then, by using the definition of non-main eigenvector, it is direct to verify that $\lambda$ and $\mu$ are the non-main eigenvalues of $\mathcal{A}(\mathcal{H})$ with corresponding non-main eigenvectors $ \left[ \begin{matrix}
			\mathbf{x}\\
			\mathbf{0}_{n_2} 
		\end{matrix} \right]$ and $ \left[ \begin{matrix}
			\mathbf{0}_{n_1}\\
			\mathbf{z} 
		\end{matrix} \right],$ respectively.
		If $\mathcal{H}_1$ is $d_1$-regular and $\mathcal{H}_2$ is $d_2$-regular, then define $ \mathbf{y}=\mathbf{y}^{(i)} := \left[ \begin{matrix}
			\epsilon_i \mathbf{1}_{n_1} \\
			\mathbf{1}_{n_2}
		\end{matrix} \right], $ and $P_i := d_1+ \sum\limits_{j=1}^{r-1} \binom{n_2}{j} \binom{n_1}{r-1-j} \epsilon_i^{-j}$. By using the eigen-equation, for any $j \in [n_1],$ we have
		\begin{align*}
			(\mathcal{A}(\mathcal{H}) \mathbf{y}^{r-1})_j - P_i y_j^{r-1} = \sum\limits_{e \in \mathcal{E}_j(\mathcal{H})} \mathbf{y}^{e\setminus \{ j\} } - P_i y_j^{r-1} = (d_1 -P_i) \epsilon^{r-1} + \sum\limits_{j=1}^{r-1} \binom{n_2}{j} \binom{n_1}{r-1-j} \epsilon^{r-1-j} =0.
		\end{align*}
		Again, by using Equation \ref{eqn_join_adjacency}, let $P_i:= d_2+ \sum\limits_{j=1}^{r-1} \binom{n_1}{j}\binom{n_2}{r-1-j} \epsilon_i^{j} $ for any $k \in [n_1+n_2] \setminus [n_1]$, we have 
		\begin{align*}
			(\mathcal{A}(\mathcal{H}) \mathbf{y}^{r-1})_k -  P_i y_j^{r-1} &= \sum\limits_{e \in \mathcal{E}_j(\mathcal{H})} \mathbf{y}^{e\setminus \{ j\} } - P_i y_j^{r-1} = (d_2 -P_i)  + \sum\limits_{j=1}^{r-1} \binom{n_1}{j} \binom{n_2}{r-1-j} \epsilon^{j} =0.
		\end{align*}
		If $\epsilon_i > 0$ for some $i$, then $\mathbf{y}^{(i)} >0$. Hence by using Perron-Frobenius theorem, $P_i$ is the spectral radius of $\mathcal{A}(\mathcal{H})$. 
		
		\noindent  The proof follows analogously for the signless Laplacian hypermatrix of uniform hypergraphs.
	\end{proof}

	\subsection{Kronecker Product}

	\begin{definition}\label{Kronecker_product}
		Given two $r$-uniform hypergraphs $\mathcal{H}_1$ and $\mathcal{H}_2$ of orders $n_1$ and $n_2$, respectively. 
		Then Kronecker product of $\mathcal{H}_1$ and $\mathcal{H}_2$, denoted by $\mathcal{H}_1 \otimes \mathcal{H}_2$ has the vertex set $\mathcal{V}(\mathcal{H}_1 \otimes \mathcal{H}_2)= \mathcal{V}(\mathcal{H}_1) \times \mathcal{V}(\mathcal{H}_2)$, and the edge set is given by $\mathcal{E}(\mathcal{H}_1 \otimes \mathcal{H}_2) = \{ \{ (u_1,v_1'), \ldots, (u_r,v_r') \} \vert e=\{u_1,\ldots,u_r \} \in \mathcal{E}(\mathcal{H}_1), e'=\{u_1',\ldots,u_r' \} \in \mathcal{E}(\mathcal{H}_2)\}$.
	\end{definition}
	Note that the number of vertices in $\mathcal{H}_1 \otimes \mathcal{H}_2$ is $n_1n_2$, and the number of hyperedges in $\mathcal{H}_1 \otimes \mathcal{H}_2$ is $(r-1)!m_1m_2$, where $m_1=\vert \mathcal{E}(\mathcal{H}_1)  \vert $ and $m_2=\vert \mathcal{E}(\mathcal{H}_2) \vert.$

	\begin{theorem}\cite{shao2013general}\label{Kronecker_adjacency}
		Given two $r$-uniform hypergraphs $\mathcal{H}_1$ and $\mathcal{H}_2,$ let $\mathcal{H}:=\mathcal{H}_1 \otimes \mathcal{H}_2$ be the Kronecker product of $\mathcal{H}_1$ and $\mathcal{H}_2$.
		If $(\lambda, \mathbf{x})$ is an eigenpair of $\mathcal{H}_1$ and $(\mu,\mathbf{z})$ is an eigenpair of $\mathcal{H}_2$, then $((r-1)!\lambda\mu, \mathbf{x} \otimes \mathbf{z})$ forms an eigenpair for $\mathcal{H}$.
	\end{theorem}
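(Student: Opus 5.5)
The plan is to show that the adjacency hypermatrix of $\mathcal{H}=\mathcal{H}_1\otimes\mathcal{H}_2$ is, up to the scalar $(r-1)!$, the tensor Kronecker product of $\mathcal{A}_1:=\mathcal{A}(\mathcal{H}_1)$ and $\mathcal{A}_2:=\mathcal{A}(\mathcal{H}_2)$. Once that is in place, the eigen-equation factorizes coordinatewise.

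\textbf{Step 1: entrywise identification.} Index the vertices of $\mathcal{H}$ by pairs $(u,v)$ with $u\in[n_1]$ and $v\in[n_2]$. Define $\mathcal{A}_1\otimes\mathcal{A}_2$ as the order $r$, dimension $n_1n_2$ hypermatrix with entries
\begin{equation*}
(\mathcal{A}_1\otimes\mathcal{A}_2)_{(u_1,v_1)\ldots(u_r,v_r)}=(\mathcal{A}_1)_{u_1\ldots u_r}(\mathcal{A}_2)_{v_1\ldots v_r}.
\end{equation*}
I claim that $\mathcal{A}(\mathcal{H})=(r-1)!\,(\mathcal{A}_1\otimes\mathcal{A}_2)$. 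The right-hand entry is nonzero exactly when $\{u_1,\ldots,u_r\}\in\mathcal{E}(\mathcal{H}_1)$ and $\{v_1,\ldots,v_r\}\in\mathcal{E}(\mathcal{H}_2)$, and its value is then $(r-1)!\cdot\frac{1}{((r-1)!)^2}=\frac{1}{(r-1)!}$. It remains to check that this condition is equivalent to $\{(u_1,v_1),\ldots,(u_r,v_r)\}$ being a hyperedge of $\mathcal{H}$ in the sense of Definition \ref{Kronecker_product}.
\begin{itemize}
\item If both coordinate sets are hyperedges, then the $u_i$ are pairwise distinct. Hence the $r$ pairs are distinct, and they form an $r$-set that pairs an edge of $\mathcal{H}_1$ bijectively with an edge of $\mathcal{H}_2$.
\item Conversely, every hyperedge of $\mathcal{H}$ has this form. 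Any ordering of it as an index tuple projects onto orderings of $e$ and of $e'$.
\end{itemize}
This bookkeeping is the only delicate point. In particular one must check that all $(r-1)!$ bijections between $e$ and $e'$ give distinct hyperedges, consistent with the stated edge count, and that no tuple with repeated pairs is counted.

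\textbf{Step 2: factorize the eigen-equation.} Let $\mathbf{y}=\mathbf{x}\otimes\mathbf{z}$, so that $y_{(u,v)}=x_uz_v$. For each coordinate $(u,v)$, the sum over tuples splits as
\begin{equation*}
\sum_{(u_2,v_2),\ldots,(u_r,v_r)}(\mathcal{A}_1)_{uu_2\ldots u_r}(\mathcal{A}_2)_{vv_2\ldots v_r}\prod_{k\ge2}x_{u_k}z_{v_k}=(\mathcal{A}_1\mathbf{x}^{r-1})_u\,(\mathcal{A}_2\mathbf{z}^{r-1})_v .
\end{equation*}
This product equals $\lambda x_u^{r-1}\cdot\mu z_v^{r-1}=\lambda\mu\, y_{(u,v)}^{r-1}$.

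\textbf{Step 3: conclude.} Multiplying by $(r-1)!$ and using Step 1 gives $\mathcal{A}(\mathcal{H})\mathbf{y}^{r-1}=(r-1)!\lambda\mu\,\mathbf{y}^{[r-1]}$. Finally, $\mathbf{y}\neq\mathbf{0}$ because $\mathbf{x}$ and $\mathbf{z}$ are both nonzero. Hence $((r-1)!\lambda\mu,\mathbf{x}\otimes\mathbf{z})$ is an eigenpair of $\mathcal{H}$.
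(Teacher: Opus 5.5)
Your proof is correct, and it is the standard argument behind this result. The paper gives no proof of its own and cites Shao; that route is the identification $\mathcal{A}(\mathcal{H})=(r-1)!\,(\mathcal{A}_1\otimes\mathcal{A}_2)$ followed by the mixed-product factorization $(\mathcal{A}_1\otimes\mathcal{A}_2)(\mathbf{x}\otimes\mathbf{z})^{r-1}=(\mathcal{A}_1\mathbf{x}^{r-1})\otimes(\mathcal{A}_2\mathbf{z}^{r-1})$, which is exactly your Steps 1--2. The same factorization, with the factor $(r-1)!$, is what the paper computes in its Kronecker Laplacian theorem.

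One correction to your side remark in Step 1: there are $r!$ bijections $e\to e'$, not $(r-1)!$. So $\mathcal{H}$ has $r!\,m_1m_2$ hyperedges, and the paper's stated count $(r-1)!\,m_1m_2$ is a slip; the case $r=2$ already shows this. The number $(r-1)!$ counts the hyperedges over a fixed pair $(e,e')$ that pass through a fixed vertex $(u,v)$ with $u\in e$, $v\in e'$. That local count, not the total edge count, is where the factor $(r-1)!$ in the eigenvalue comes from. Your entrywise identification needs no edge count at all, so you should simply drop that remark.
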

	
	\begin{remark}
		If $r=2,$ then Kronecker (direct) product can be used to construct graphs satisfying reciprocal eigenvalue property from the smaller graphs. However, for $r \geq 3$, Theorem \ref{Kronecker_adjacency} shows that the Kronecker product no longer provides a direct solution.
	\end{remark}
	
	In the following result, we determine $n(r-1)^{n_1+n_2-2}$ Laplacian and signless Laplacian eigenvalues of Kronecker product of regular hypergraphs $\mathcal{H}_1 \otimes \mathcal{H}_2$, a generalization of the result from spectral graph theory.

	\begin{theorem}
		Given two $r$-uniform regular hypergraphs $\mathcal{H}_1$ and $\mathcal{H}_2$ with regularities $d_1$ and $d_2$, respectively.  Let $\mathcal{H}:=\mathcal{H}_1 \otimes \mathcal{H}_2$ be the Kronecker product of $\mathcal{H}_1$ and $\mathcal{H}_2$. 
		\begin{itemize}
			\item If $(\lambda, \mathbf{x})$ is an eigenpair of $\mathcal{L}(\mathcal{H}_1)$ and $(\mu, \mathbf{z})$ is an eigenpair of $\mathcal{L}(\mathcal{H}_2),$ then $(r-1)![\lambda d_2+ \mu d_1 - \lambda \mu]$ is an eigenvalue of $\mathcal{L}(\mathcal{H})$ with corresponding eigenvector $\mathbf{x} \otimes \mathbf{z}.$ 
			\item If $(\beta, \mathbf{w})$ is an eigenpair of $\mathcal{Q}(\mathcal{H}_1)$ and $(\theta, \mathbf{y})$ is an eigenpair of $\mathcal{Q}(\mathcal{H}_2),$ then $(r-1)![\beta \theta + 2d_1d_2 -(\beta d_2+ \theta d_1)]$ is an eigenvalue of $\mathcal{Q}(\mathcal{H})$ with corresponding eigenvector $\mathbf{w} \otimes \mathbf{y}.$
		\end{itemize}
	\end{theorem}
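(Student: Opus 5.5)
The plan is to reduce everything to Theorem \ref{Kronecker_adjacency} together with a computation of the degree hypermatrix of $\mathcal{H}$. The first step is to find the degrees in $\mathcal{H}=\mathcal{H}_1\otimes\mathcal{H}_2$. A vertex $(u,v)$ lies in a hyperedge of $\mathcal{H}$ exactly when $u$ lies in an edge $e\in\mathcal{E}(\mathcal{H}_1)$ and $v$ lies in an edge $e'\in\mathcal{E}(\mathcal{H}_2)$. Each such pair $(e,e')$ yields $(r-1)!$ hyperedges through $(u,v)$, one for each bijection pairing the remaining $r-1$ vertices of $e$ with the remaining $r-1$ vertices of $e'$. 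Hence $d_{\mathcal{H}}((u,v))=(r-1)!\,d_1d_2$, so $\mathcal{H}$ is regular and $\mathcal{D}(\mathcal{H})=(r-1)!d_1d_2\,\mathcal{I}$. This count is consistent with the stated number of hyperedges, $(r-1)!m_1m_2$.

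Next I rewrite each Laplacian eigenpair as an adjacency eigenpair. Since $\mathcal{H}_1$ is $d_1$-regular, the identity $\mathcal{L}(\mathcal{H}_1)\mathbf{x}^{r-1}=d_1\mathbf{x}^{[r-1]}-\mathcal{A}(\mathcal{H}_1)\mathbf{x}^{r-1}$ shows that $(\lambda,\mathbf{x})$ being an eigenpair of $\mathcal{L}(\mathcal{H}_1)$ is equivalent to $(d_1-\lambda,\mathbf{x})$ being an eigenpair of $\mathcal{A}(\mathcal{H}_1)$. In the same way, $(d_2-\mu,\mathbf{z})$ is an eigenpair of $\mathcal{A}(\mathcal{H}_2)$. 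Theorem \ref{Kronecker_adjacency} then makes $((r-1)!(d_1-\lambda)(d_2-\mu),\ \mathbf{x}\otimes\mathbf{z})$ an eigenpair of $\mathcal{A}(\mathcal{H})$. Using $(\mathbf{x}\otimes\mathbf{z})^{[r-1]}=\mathbf{x}^{[r-1]}\otimes\mathbf{z}^{[r-1]}$ and the degree computation, I get
\[
\mathcal{L}(\mathcal{H})(\mathbf{x}\otimes\mathbf{z})^{r-1}=(r-1)!\big[d_1d_2-(d_1-\lambda)(d_2-\mu)\big](\mathbf{x}\otimes\mathbf{z})^{[r-1]}.
\]
Expanding the bracket gives $\lambda d_2+\mu d_1-\lambda\mu$, which is the first claim.

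The signless Laplacian case runs the same way. Here $(\beta-d_1,\mathbf{w})$ and $(\theta-d_2,\mathbf{y})$ are adjacency eigenpairs, so
\[
\mathcal{Q}(\mathcal{H})(\mathbf{w}\otimes\mathbf{y})^{r-1}=(r-1)!\big[d_1d_2+(\beta-d_1)(\theta-d_2)\big](\mathbf{w}\otimes\mathbf{y})^{[r-1]}.
\]
The bracket expands to $\beta\theta+2d_1d_2-(\beta d_2+\theta d_1)$, as claimed.

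The main point needing care is the degree count, in particular the factor $(r-1)!$ coming from bijective matchings of the remaining vertices. That count must agree with the normalisation in Theorem \ref{Kronecker_adjacency}, which I would justify by expanding $\mathcal{A}(\mathcal{H})(\mathbf{x}\otimes\mathbf{z})^{r-1}$ entrywise. The other ingredient is the routine identity that the elementwise power of a Kronecker product of vectors factors, which holds coordinatewise.
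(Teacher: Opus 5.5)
Your proof is correct and follows the paper's route: both rest on $d_{\mathcal{H}}((u,v))=(r-1)!\,d_1d_2$ and on the factorisation $(\mathcal{A}(\mathcal{H})(\mathbf{x}\otimes\mathbf{z})^{r-1})_{(u,v)}=(r-1)!\,(\mathcal{A}(\mathcal{H}_1)\mathbf{x}^{r-1})_u\,(\mathcal{A}(\mathcal{H}_2)\mathbf{z}^{r-1})_v$. The paper expands this entrywise, while you invoke it as Theorem~\ref{Kronecker_adjacency} after converting the (signless) Laplacian eigenpairs of the regular factors into adjacency eigenpairs. One side remark is wrong but harmless: by the handshake identity your degree count gives $|\mathcal{E}(\mathcal{H})|=r!\,m_1m_2$, not $(r-1)!\,m_1m_2$, so it does not corroborate the paper's edge count.
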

	\begin{proof}
		Let $ij':=(i, j'), ~1 \leq i \leq n_1, ~ 1 \leq j' \leq n_2 $ be the vertices of $\mathcal{H}_1 \otimes \mathcal{H}_2$, where $n_1$ and $n_2$ are the orders of $\mathcal{H}_1$ and $\mathcal{H}_2$, respectively. 
		Assume that $\lambda$ (resp. $\mu$) is an eigenvalue of $\mathcal{L}(\mathcal{H}_1)$ (resp $\mathcal{L}(\mathcal{H}_2)$) corresponding to the eigenvector $\mathbf{x}$ (resp. $\mathbf{z}$). 
		Let us define $\mathbf{y}:= \mathbf{x}\otimes \mathbf{z}$ and $\eta := (r-1)![\lambda d_2+ \mu d_1 - \lambda \mu]$. Then, for any $ij' \in \mathcal{V}(\mathcal{H}),$ 
		\begin{align*}
			(\mathcal{L}(\mathcal{H})\mathbf{y}^{r-1})_{ij'} - \eta y_{ij'}^{r-1}&= d_{\mathcal{H}}(ij') \mathbf{y}_{ij'}^{r-1} -\sum\limits_{e^* \in \mathcal{E}_{ij'}(\mathcal{H})} \mathbf{y}^{e^*\setminus \{ ij' \} }- \eta y_{ij'}^{r-1} \\
			&=(r-1)!d_1d_2x_i^{r-1}z_{j'}^{r-1}-\sum\limits_{e \in \mathcal{E}(\mathcal{H}_1)} \sum\limits_{e' \in \mathcal{E}(\mathcal{H}_2)} (r-1)! \mathbf{x}^{e\setminus \{ i \}} \mathbf{z}^{e'\setminus \{ j' \}} - \eta x_i^{r-1} z_{j'}^{r-1} = 0.          \end{align*}
		The proof follows analogously for the signless Laplacian hypermatrix of uniform hypergraphs.
	\end{proof}

	\subsection{Corona of two Hypergraphs}
	
	\begin{definition}\cite{shetty2025forgotten}
		Let $\mathcal{G}=(\mathcal{U}=\{ u_1, \ldots, u_{n_1}\}, \mathcal{E}')$ and $\mathcal{H}=(\mathcal{V}=\{ v_1, \ldots, v_{n_2}\}, \mathcal{E})$ be two $r$-uniform hypergraphs of order $n_1$ and $n_2$ respectively, also $ \{ \mathcal{H}^{(i)}=(\mathcal{V}_i=\{ v_1^{(i)}, \ldots, v_{n_2}^{(i)}\}, \mathcal{E}_i): 1 \leq i \leq n_1 \}$ be the collection of $n_1$ copies of $\mathcal{H}$
		and $\mathcal{W}_i = \mathcal{V}_i \cup \{ u_i \}$.
		Now, $r$-uniform corona product  $\mathcal{G} \circ^r \mathcal{H}$ of two hypergraphs $\mathcal{G}$ and $\mathcal{H}$ has the vertex set $\bigcup\limits_{i=1}^{n_1} \mathcal{W}_i$ and the edge set $\bigcup\limits_{i=1}^{n_1} \left( \mathcal{E}_i \cup \mathcal{E}_i^{\dagger} \right) \cup \mathcal{E}',$ where
		$$ \mathcal{E}_i^\dagger = \left\{e \subseteq \mathcal{W}_i: u_i \in e
		\text{ and } \vert e \vert = r  \right\}. $$ 
	\end{definition}

	\begin{theorem}\label{Corona_one_edge}
		Given an $r$-uniform hypergraph $\mathcal{H}_1$ on $n$ vertices, let $ \mathcal{H} = \mathcal{H}_1 \odot (r-1) K_1$. 
		\begin{itemize}
			\item  If $\lambda$ is an eigenvalue of $\mathcal{A}(\mathcal{H}_1)$, 
			then $\mu_t$ are the eigenvalues of $\mathcal{A}(\mathcal{H}),$
			where $\mu_t,1\leq t \leq r$ are the roots of the polynomial $$\mu^r - \lambda \mu^{r-1} -1.$$
			\item  If $\theta$ is an eigenvalue of $\mathcal{L}(\mathcal{H}_1)$,
			then $\mu_t \neq 1$ are the eigenvalues of $\mathcal{L}(\mathcal{H}),$ where $\mu_t, 1 \leq t \leq r$ are the roots of the polynomial 
			$$ (\mu-\theta-1)(1-\mu)^{r-1}+1.$$
			\item If $\beta$ is an eigenvalue of $\mathcal{Q}(\mathcal{H}_1),$ 
			then $\mu_t \neq 1$ are the eigenvalues of $\mathcal{Q}(\mathcal{H}),$ where $\mu_t, 1 \leq t \leq r$ are the roots of the polynomial     $$ (\mu-\beta-1)(\mu-1)^{r-1}-1.$$
		\end{itemize}
	\end{theorem}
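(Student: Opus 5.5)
Understand $\mathcal{H}=\mathcal{H}_1 \odot (r-1)K_1$ as the $r$-uniform corona in which each vertex $u_i$ of $\mathcal{H}_1$ receives $r-1$ new pendant vertices $v_i^{(1)},\ldots,v_i^{(r-1)}$. These form exactly one new hyperedge $\{u_i,v_i^{(1)},\ldots,v_i^{(r-1)}\}$, since $\mathcal{E}_i^\dagger$ contains only that one $r$-set. The plan is to lift an eigenpair of $\mathcal{H}_1$ directly. Start from an eigenpair $(\lambda,\mathbf{x})$ of $\mathcal{A}(\mathcal{H}_1)$, and look for an eigenvector $\mathbf{y}$ of $\mathcal{A}(\mathcal{H})$ with $y_{u_i}=\mu x_i$ and $y_{v_i^{(k)}}=c\,x_i$ for all $k$, where $\mu$ and $c$ are scalars to be fixed. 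One degree of freedom can be normalized away by scaling $\mathbf{y}$.

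The first step is the eigen-equation at a pendant vertex $v_i^{(k)}$. Its only hyperedge is the pendant edge, so the equation reads $\mu x_i (c x_i)^{r-2} = \mu\, (c x_i)^{r-1}$. This gives $c=1$ whenever $x_i\neq 0$, and holds trivially when $x_i=0$. So take $y_{u_i}=\mu x_i$ and $y_{v_i^{(k)}}=x_i$.

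The second step is the equation at $u_i$. Since the degrees of $\mathcal{H}_1$ are untouched in $\mathcal{A}$, it reads
\[
\sum_{e\in\mathcal{E}_i(\mathcal{H}_1)} \mathbf{y}^{e\setminus\{i\}} + x_i^{r-1} = \mu\,(\mu x_i)^{r-1}.
\]
The first term equals $\mu^{r-1}\lambda x_i^{r-1}$, by homogeneity of degree $r-1$ and the eigen-equation for $\mathbf{x}$. The identity $\mu^{r-1}\lambda+1=\mu^{r}$ is exactly the vanishing of $\mu^r-\lambda\mu^{r-1}-1$. Each of its $r$ roots is nonzero (the constant term is $-1$), so $\mathbf{y}\neq 0$ and $(\mu_t,\mathbf{y})$ is an eigenpair. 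Note the normalization: put the eigenvalue parameter on the $u_i$-coordinates rather than on the pendant ones, so that the pendant equation becomes trivial.

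For $\mathcal{L}$ and $\mathcal{Q}$ I would use the same ansatz, with $y_{u_i}=x_i$ and $y_{v_i^{(k)}}=s x_i$. Each pendant vertex has degree $1$, so its $\mathcal{L}$-equation is $s^{r-1}x_i^{r-1}-s^{r-2}x_i^{r-1}=\mu s^{r-1}x_i^{r-1}$, which gives $s=1/(1-\mu)$; this is where $\mu\neq1$ is needed. At $u_i$ the degree rises by one. Using the eigen-equation of $\mathcal{L}(\mathcal{H}_1)$, the equation becomes $(\theta+1)x_i^{r-1}-s^{r-1}x_i^{r-1}=\mu x_i^{r-1}$, i.e. $\mu-\theta-1+(1-\mu)^{-(r-1)}=0$. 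Multiplying by $(1-\mu)^{r-1}$ gives the stated polynomial. For $\mathcal{Q}$ the signs flip: $s=1/(\mu-1)$, and then $(\mu-\beta-1)(\mu-1)^{r-1}-1=0$.

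The main obstacle is the coordinates with $x_i=0$, where the pendant equation does not determine the scalar. This is harmless, because every term in both equations then vanishes identically, so the ansatz still works. Beyond that, the only real care needed is bookkeeping: checking that the $\mathcal{H}_1$-part factors through the original eigen-equation exactly, because each hyperedge of $\mathcal{H}_1$ is scaled uniformly by $\mu^{r-1}$ (or by $1$ in the Laplacian case).
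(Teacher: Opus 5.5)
Your proposal is correct and is essentially the paper's proof: both lift $\mathbf{x}$ by giving every pendant vertex attached at vertex $i$ a fixed multiple of $x_i$. Both then check the eigen-equation separately at the vertices of $\mathcal{H}_1$ and at the pendant vertices. Your adjacency eigenvector is just $\mu_t$ times the paper's $[\mathbf{x}^\intercal, \frac{x_1}{\mu_t}\mathbf{1}_{r-1}^\intercal,\ldots,\frac{x_n}{\mu_t}\mathbf{1}_{r-1}^\intercal]^\intercal$, and unlike the paper you do not need the unnecessary assumption $\lambda\neq 0$. Your Laplacian and signless Laplacian vectors, with $s=1/(1-\mu)$ and $s=1/(\mu-1)$, coincide with the paper's.
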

	\begin{proof}
		Let $\mathcal{H}_1$ be a $r$-uniform hypergraph on  the vertex set $[n],$ and $H = \mathcal{H}_1 \odot (r-1) K_1$ be an $r$-uniform hypergraph with $p_i :=\{ i, n+1+(i-1)(r-1), \ldots, n+i(r-1) \}, 1 \leq i \leq n$ be the newly added hyperedges of $\mathcal{H}$.
		Suppose that $(\lambda \neq 0, \mathbf{x})$ is an eigenpair of $\mathcal{H}_1.$ 
		If $\mu_t, 1 \leq t \leq r$ denotes the roots of the polynomial $\mu^r-\lambda \mu^{r-1} -1,$ then define the vectors $\mathbf{y}=\mathbf{y}^{(i)} := [\mathbf{x}^\intercal, \ \frac{x_1}{\mu_t}\mathbf{1}_{r-1}^\intercal, \ldots  \frac{x_n}{\mu_t}\mathbf{1}_{r-1}^\intercal]^\intercal$,  $ 1 \leq t \leq r.$
		By using the eigen equation and the  of $\mu_t$, for any $i \in [n],$ we have
		\begin{equation*}
			\mu_t{y}_i^{r-1} -  (\mathcal{A}(\mathcal{H})\mathbf{y}^{r-1})_i = \mu_t{x}_i^{r-1} - (\mathcal{A}(\mathcal{H}_1)\mathbf{x}^{r-1})_i - \mathbf{y}^{p_i \setminus \{i\}} = \mu_t{x}_i^{r-1}-\lambda x_i^{r-1} - \left(\frac{x_i}{\mu_t}\right)^{r-1}=0.
		\end{equation*}
		For all $j \in p_i \setminus \{i\}$, we have 
		\begin{equation*}
			\mu_t y_j^{r-1}-(\mathcal{A}(\mathcal{H})\mathbf{y}^{r-1})_j =\mu_t \left(\frac{x_i}{\mu_t} \right)^{r-1} - \mathbf{y}^{p_i \setminus\{ j \} } = 0.
		\end{equation*}
		
		Similarly, suppose that $(\theta, \mathbf{w})$ is an eigenpair of $\mathcal{L}(\mathcal{H})$, and $\mu = \mu_i ( \neq 1 )$ is a root of the  polynomial  \\ $\left( \mu-\theta-1 \right)\left( 1-\mu \right)^{r-1}-1$. 
		Define $\mathbf{y}=\mathbf{y}^{(t)} := \left[ \mathbf{w}^\intercal, \frac{w_1}{1-\mu_t}\mathbf{1}_{r-1}^{\intercal}, \ldots , \frac{w_n}{1-\mu_t }\mathbf{1}_{r-1}^{\intercal} \right]^\intercal$.
		It is direct to show that $(\mu_t, \mathbf{y}^{(t)})$ forms an eigenpair for $\mathcal{L}(\mathcal{H})$.
		
		Also, suppose that $(\beta, \mathbf{z})$ is an eigenpair of $\mathcal{Q}(\mathcal{H})$, and $\mu = \mu_t ( \neq 1 )$ is a root of the 
		polynomial  $\left( \mu-\beta-1 \right)\left( \mu-1 \right)^{r-1}-1$. 
		Define $\mathbf{y}=\mathbf{y}^{(t)} := \left[ \mathbf{z}^\intercal, \frac{z_1}{\mu_t-1}\mathbf{1}_{r-1}^{\intercal}, \ldots , \frac{z_n}{\mu_t-1 }\mathbf{1}_{r-1}^{\intercal} \right]^\intercal$.
		Again, one can show that $(\mu_i, \mathbf{y}^{(i)})$ forms an eigenpair for $\mathcal{Q}(\mathcal{H})$.
	\end{proof}
	
	\begin{corollary}\cite{barik2007spectrum}
		Given a graph $\mathcal{G}$ on $n$ vertices, let $ \mathcal{G}_1 = \mathcal{G} \odot K_1$. 
		If $\mu$ is an eigenvalue of $\mathcal{G}_1$, then $\frac{-1}{\mu}$ is also an eigenvalue of $\mathcal{G}_1$.
	\end{corollary}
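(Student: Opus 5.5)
The plan is to specialize Theorem \ref{Corona_one_edge} to $r=2$ and then observe that the characteristic polynomial is invariant under $\mu \mapsto -1/\mu$. For $r=2$, the hypergraph $\mathcal{G}\odot (r-1)K_1$ is exactly the ordinary corona $\mathcal{G}\odot K_1$. The adjacency hypermatrix is then the usual adjacency matrix, since the factor is $\frac{1}{(r-1)!}=1$.

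First, Theorem \ref{Corona_one_edge} tells us that for every eigenvalue $\lambda$ of $\mathcal{G}$, the two roots of $\mu^2-\lambda\mu-1$ are eigenvalues of $\mathcal{G}_1$. We need the converse as well: every eigenvalue of $\mathcal{G}_1$ arises this way. I would establish it by a count of eigenvalues. $\mathcal{G}_1$ has $2n$ vertices, and as $\lambda$ runs over the $n$ eigenvalues of $A(\mathcal{G})$ with multiplicity, we obtain $2n$ values. To make this a genuine multiset statement, I would choose an orthonormal eigenbasis $\mathbf{x}^{(1)},\ldots,\mathbf{x}^{(n)}$ of the symmetric matrix $A(\mathcal{G})$. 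The vectors $\mathbf{y}=[\mathbf{x}^\intercal,\ \mathbf{x}^\intercal/\mu]^\intercal$ built in the proof of Theorem \ref{Corona_one_edge} are then linearly independent across all $2n$ choices.

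To see this, note that for a fixed $\lambda$ the two roots $\mu_1,\mu_2$ satisfy $\mu_1\mu_2=-1$ and are distinct, because the discriminant $\lambda^2+4$ is positive. The pair of vectors for this $\lambda$ therefore spans $\{[\mathbf{x};\mathbf{0}],[\mathbf{0};\mathbf{x}]\}$. Taking all $\lambda$ together, these pairs span $\mathbb{R}^{2n}$. Hence the spectrum of $\mathcal{G}_1$ is exactly $\{\mu : \mu^2-\lambda\mu-1=0,\ \lambda\in\sigma(\mathcal{G})\}$. Equivalently, one can compute directly $\det(\mu I - A(\mathcal{G}_1))=\det\big((\mu^2-1)I-\mu A(\mathcal{G})\big)$ via the Schur complement of the block matrix $\begin{pmatrix}A & I\\ I & 0\end{pmatrix}$.

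Finally, let $\mu$ be an eigenvalue of $\mathcal{G}_1$, so $\mu$ is a root of $\mu^2-\lambda\mu-1$ for some $\lambda\in\sigma(\mathcal{G})$. Then $\mu\neq0$, since the constant term is $-1$. By Vieta, the other root is $-1/\mu$, and it is also an eigenvalue of $\mathcal{G}_1$ by the previous step. One can also check this directly: substituting $\nu=-1/\mu$ gives $\nu^2-\lambda\nu-1=\mu^{-2}(1+\lambda\mu-\mu^2)=0$.

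The only real obstacle is the completeness claim, namely that no eigenvalue of $\mathcal{G}_1$ escapes the construction. I would settle it by the determinant identity, which is a two-line block-matrix computation.
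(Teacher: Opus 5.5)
Your proposal is correct and follows the paper's route: the paper gives no separate proof and treats the corollary as the $r=2$ case of Theorem \ref{Corona_one_edge}, where eigenvalues come as pairs of roots of $\mu^2-\lambda\mu-1$ whose product is $-1$. Your completeness step is a genuine addition. This is the identity $\det(\mu I-A(\mathcal{G}_1))=\det\big((\mu^2-1)I-\mu A(\mathcal{G})\big)$, or equivalently the $2n$ independent eigenvectors. Theorem \ref{Corona_one_edge} only shows these roots are eigenvalues, and the paper never justifies that every eigenvalue of $\mathcal{G}_1$ arises this way, which the corollary's conclusion actually requires.
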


	We can see the notion of corona of a hypergraph $\mathcal{H}$ with $r-1$ copies of $K_1$ as adding a pendant hyperedge at each vertex of $\mathcal{H}$. In this point of view, we extend this and will define a hypergraph $\hat{\mathcal{H}}(k), ~ k \geq 1$ to be the hypergraph obtained from $\mathcal{H}$ by adding $k$ pendant hyperedges at each vertex of $\mathcal{H}$.
	Observe that $\hat{\mathcal{H}}(1) \cong \mathcal{H} \odot (r-1) K_1.$ 
	\begin{theorem}
		Given an $r$-uniform hypergraph $\mathcal{H}$ on $n$ vertices, let $\hat{\mathcal{H}}(k), k \geq 1$ be obtained from $\mathcal{H}$ by adding $k$ pendent hyperedges at each vertex of $\mathcal{H}$. Then
		\begin{itemize}
			\item  If $\lambda$ is an eigenvalue of $\mathcal{A}(\mathcal{H})$, 
			then $\mu_i$ are the eigenvalues of $\mathcal{A}(\hat{\mathcal{H}}),$
			where $\mu_i,1\leq i \leq r$ are the roots of the polynomial $$\mu^r - \lambda \mu^{r-1} -k.$$
			\item  If $\theta$ is an eigenvalue of $\mathcal{L}(\mathcal{H})$, then $\mu_i \neq 1$ are the eigenvalues of $\mathcal{L}(\hat{\mathcal{H}}),$ where $\mu_i, 1 \leq i \leq r$ are the roots of the polynomial  $$ (\mu-\theta-k)(1-\mu)^{r-1}+k.$$
			\item If $\beta$ is an eigenvalue of $\mathcal{Q}(\mathcal{H}),$ then $\mu_i \neq 1$ are the eigenvalues of $\mathcal{Q}(\hat{\mathcal{H}}),$ where $\mu_i, 1 \leq i \leq r$ are the roots of the polynomial     $$ (\mu-\beta-k)(\mu-1)^{r-1}-k.$$
		\end{itemize}
	\end{theorem}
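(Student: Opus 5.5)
The proof will follow the pattern of Theorem \ref{Corona_one_edge}: lift an eigenvector of $\mathcal{H}$ to $\hat{\mathcal{H}}(k)$ by putting equal values on the pendant vertices. Label the vertices of $\hat{\mathcal{H}}(k)$ as $[n]$ (the original vertices) together with, for each $i\in[n]$ and $1\le s\le k$, a pendant hyperedge $p_{i,s}=\{i\}\cup Q_{i,s}$, where $|Q_{i,s}|=r-1$. Each pendant vertex $j\in Q_{i,s}$ has degree $1$ and lies only in $p_{i,s}$. Each original vertex $i$ has $d_{\hat{\mathcal{H}}}(i)=d_{\mathcal{H}}(i)+k$.

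\textbf{Adjacency case.} Let $(\lambda,\mathbf{x})$ be an eigenpair of $\mathcal{A}(\mathcal{H})$ and let $\mu$ be a root of $\mu^r-\lambda\mu^{r-1}-k$. Since $k\ge 1$, $\mu\neq 0$. Define $\mathbf{y}$ by $y_i=x_i$ on $[n]$ and $y_j=x_i/\mu$ for every $j\in Q_{i,s}$, for all $s$. We check the eigen-equation coordinatewise.

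\begin{itemize}
\item At an original vertex $i$:
\[
(\mathcal{A}(\hat{\mathcal{H}})\mathbf{y}^{r-1})_i=\lambda x_i^{r-1}+k\,(x_i/\mu)^{r-1}.
\]
This equals $\mu x_i^{r-1}$ because $\mu^r=\lambda\mu^{r-1}+k$, after multiplying through by $\mu^{r-1}$.
\item At a pendant vertex $j\in Q_{i,s}$, the product over the other $r-1$ vertices of $p_{i,s}$ is
\[
x_i\,(x_i/\mu)^{r-2}=\mu\,(x_i/\mu)^{r-1}=\mu\, y_j^{r-1},
\]
as required.
\end{itemize}

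We also need $\mathbf{y}\neq\mathbf{0}$, which holds because $\mathbf{x}\neq\mathbf{0}$.

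\textbf{Laplacian and signless Laplacian cases.} Let $(\theta,\mathbf{w})$ be an eigenpair of $\mathcal{L}(\mathcal{H})$ and $\mu\ne 1$ a root of the stated polynomial. Set $y_j=w_i/(1-\mu)$ for $j\in Q_{i,s}$, and keep $y_i=w_i$ on $[n]$.

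\begin{itemize}
\item At a pendant vertex the equation reads $y_j^{r-1}-w_i y_j^{r-2}=\mu y_j^{r-1}$. This is equivalent to $(1-\mu)y_j=w_i$, which holds by the choice of $y_j$ (and this is why $\mu\ne1$ is needed).
\item At an original vertex the equation reads
\[
(d_{\mathcal{H}}(i)+k)w_i^{r-1}-(\mathcal{A}(\mathcal{H})\mathbf{w}^{r-1})_i-k\,w_i^{r-1}(1-\mu)^{-(r-1)}=\mu w_i^{r-1}.
\]
Using $\mathcal{L}(\mathcal{H})\mathbf{w}^{r-1}=\theta\mathbf{w}^{[r-1]}$, this reduces to $\theta+k-k(1-\mu)^{-(r-1)}=\mu$. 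Multiplying by $(1-\mu)^{r-1}$ gives exactly $(\mu-\theta-k)(1-\mu)^{r-1}+k=0$.
\end{itemize}

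The signless Laplacian case is identical, with $y_j=z_i/(\mu-1)$; the same computation yields $(\mu-\beta-k)(\mu-1)^{r-1}-k=0$.

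\textbf{Main obstacle.} The only delicate point is the bookkeeping at the original vertices. We must verify that the $k$ pendant hyperedges at $i$ contribute exactly $k(\text{pendant value})^{r-1}$ to the $i$-th coordinate and nothing to any other original vertex. We also need the adjacency normalisation $1/(r-1)!$ to produce a single monomial per hyperedge in $(\mathcal{A}\mathbf{y}^{r-1})_i$; it does, because the $(r-1)!$ orderings of the remaining vertices cancel it.
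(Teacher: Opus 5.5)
Your proof is correct and takes the paper's approach. The paper's proof only says the eigenvector is built as in Theorem~\ref{Corona_one_edge}: extend $\mathbf{x}$ (resp.\ $\mathbf{w}$, $\mathbf{z}$) by the constant value $x_i/\mu$ (resp.\ $w_i/(1-\mu)$, $z_i/(\mu-1)$) on every pendant vertex attached at $i$. Your coordinatewise check supplies the details the paper omits, and you correctly observe that $k\geq 1$ forces $\mu\neq 0$, so Theorem~\ref{Corona_one_edge}'s extra assumption $\lambda\neq 0$ is not needed.
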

	\begin{proof}
		The construction of the eigenvector is similar to the case of Theorem \ref{Corona_one_edge}.
	\end{proof}

	\begin{corollary}\footnote{This result for the case of graphs was observed by Dr. Sasmita Barik and presented in her lectures during the International Workshop on Special Matrices, Graphs and Applications-2025 organized by CARAMS, MAHE, Manipal.}
		Given a graph $G$ on $n$ vertices, let $\hat{G}(k), k \geq 1$ be obtained from $G$ by adding $k$ pendant edges at each vertex of $G$. If $\lambda$ is an eigenvalue $\hat{G}$, then $\frac{-k}{\lambda}$ is also an eigenvalue of $\hat{G}$.
	\end{corollary}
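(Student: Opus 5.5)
Specialize the preceding theorem on $\hat{\mathcal{H}}(k)$ to $r=2$, where $\hat{\mathcal{H}}(k)=\hat G(k)$ is an ordinary graph and its adjacency hypermatrix is the adjacency matrix $A(\hat G)$. By the first item of that theorem, each eigenvalue $\lambda$ of $A(G)$ yields two eigenvalues of $A(\hat G)$, namely the two roots $\mu_1,\mu_2$ of
\[
\mu^2-\lambda\mu-k=0 .
\]
By Vieta's formulas, $\mu_1\mu_2=-k$. Since $k\ge 1$, neither root is zero, so $\mu_2=-k/\mu_1$. Hence whenever one of the pair is an eigenvalue of $\hat G$, so is $-k$ divided by it.

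Next, check that every eigenvalue of $\hat G$ arises this way. The graph $\hat G$ has $n(k+1)$ vertices. Only the roots counted with multiplicity matter, and for $k\ge 2$ not all eigenvalues come from the construction. Each pendant vertex attached at $i$ carries the same eigenvector entry $x_i/\mu$, so the construction only produces vectors that are constant on the $k$ pendant vertices at each $i$.

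The remaining eigenvectors are those supported on the pendant vertices at a single vertex $i$, with entries summing to zero. A direct check of the eigen-equation shows each is an eigenvector for eigenvalue $0$: each pendant vertex has value $0\cdot$ (value at $i$) $=0$, and the row of $i$ sums the pendant entries to $0$. Together with the constructed vectors this gives $n(k-1)$ further independent eigenvectors, so the count is $2n+n(k-1)=n(k+1)$. Independence holds because the spaces of vectors constant on pendant blocks and vectors with zero-sum pendant blocks are complementary, and the $2n$ constructed vectors span the former when $\mu_1\ne\mu_2$. The degenerate case $\lambda^2=-4k$ cannot occur since $\lambda$ is real.

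The main obstacle is precisely this eigenvalue $0$ for $k\ge 2$: $-k/0$ is undefined. I would state the corollary for nonzero eigenvalues, which the Vieta pairing covers completely. Equivalently, the nonzero eigenvalues are exactly $\{\mu_1,\mu_2\}$ over $\lambda\in\sigma(A(G))$, all of them nonzero, so the claim holds for every nonzero $\lambda\in\sigma(\hat G)$. For $k=1$, $\hat G$ is nonsingular and the statement reduces to the previous corollary.

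A cleaner alternative uses block matrices. Order the vertices with the original vertices first, and write
\[
A(\hat G)=\begin{bmatrix}A&J\\ J^\intercal&0\end{bmatrix},\qquad J=I_n\otimes \mathbf 1_k^\intercal .
\]
Then $\mu\neq0$ is an eigenvalue if and only if $A+\tfrac{1}{\mu}JJ^\intercal=A+\tfrac{k}{\mu}I$ has eigenvalue $\mu$, that is, $\mu-\tfrac{k}{\mu}\in\sigma(A)$. This condition is invariant under $\mu\mapsto -k/\mu$, which proves the corollary directly.
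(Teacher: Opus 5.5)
Your argument is correct, and its first half is exactly what the paper intends: the corollary is presented as the $r=2$ case of the theorem on $\hat{\mathcal{H}}(k)$, with Vieta on $\mu^2-\lambda\mu-k$ giving $\mu_1\mu_2=-k$. However, that theorem only asserts that the roots \emph{are} eigenvalues of $\hat G$. It never shows that every eigenvalue of $\hat G$ is such a root, which is what the hypothesis ``if $\lambda$ is an eigenvalue of $\hat G$'' requires, and the paper supplies nothing further.

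Your eigenvector count provides this converse. It also exposes a real defect in the statement: for $k\ge 2$, the zero-sum vectors on a single pendant block make $0$ an eigenvalue of multiplicity $n(k-1)$. There $-k/\lambda$ is undefined, so the corollary must be restricted to nonzero eigenvalues. For $k=1$, $\hat G$ is nonsingular and nothing changes.

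Your block-matrix argument is a genuinely different and cleaner route. It bypasses the hypergraph theorem entirely and characterizes the nonzero spectrum in both directions at once as $\{\mu\neq 0 : \mu-k/\mu\in\sigma(A(G))\}$, which is visibly invariant under $\mu\mapsto -k/\mu$. Since the $\mu$-eigenspace is $\{(x, J^{\intercal}x/\mu) : A(G)x=(\mu-k/\mu)x\}$, it even shows that $\mu$ and $-k/\mu$ have equal multiplicities. The counting route instead buys the complete description of the spectrum, zero included. There, make explicit that spanning the block-constant subspace uses an eigenbasis of $A(G)$, which exists because $A(G)$ is symmetric.
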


	\begin{theorem}
		Given two $r$-uniform hypergraphs $\mathcal{H}_1$ and $\mathcal{H}_2$ with order $n_1$ and $n_2$, respectively.
		Let $\mathcal{H} = \mathcal{H}_1 \odot \mathcal{H}_2$ be the corona of $\mathcal{H}_1$ with $\mathcal{H}_2$. 
		If $\theta$ is an eigenvalue of $\mathcal{L}(\mathcal{H}_1)$, then $\mu_i ~(\neq \binom{n_2-1}{r-2})$ are the eigenvalues of $\mathcal{L}(\mathcal{H})$, where $\mu_i, 1\leq i \leq r$ are roots of the polynomial $$\left(\mu-\theta-\binom{n_2}{r-1}\right)\left(\binom{n_2-1}{r-2}-\mu\right)^{r-1}+\binom{n_2}{r-1}\binom{n_2-1}{r-2}^{r-1}.$$
		Furthermore, if $\phi$ is an eigenvalue of $\mathcal{L}(\mathcal{H}_2),$ then $\phi+\binom{n_2-1}{r-2}$ is also an eigenvalue of $\mathcal{L}(\mathcal{H})$ with Hspan multiplicity at least $n_1$.
	\end{theorem}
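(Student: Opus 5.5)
The plan is to build explicit eigenvectors, as in the proof of Theorem \ref{Corona_one_edge}. Write $a:=\binom{n_2-1}{r-2}$ and $b:=\binom{n_2}{r-1}$. In $\mathcal{H}=\mathcal{H}_1\odot\mathcal{H}_2$, the vertex $u_i$ lies in $d_{\mathcal{H}_1}(i)$ edges of $\mathcal{H}_1$ and in $b$ edges of $\mathcal{E}_i^\dagger$. A vertex $v$ of the copy $\mathcal{H}_2^{(i)}$ lies in $d_{\mathcal{H}_2}(v)$ edges of the copy and in $a$ edges of $\mathcal{E}_i^\dagger$. Every other edge avoids these vertices.

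\textbf{First family.} Let $(\theta,\mathbf{w})$ be an eigenpair of $\mathcal{L}(\mathcal{H}_1)$. I would take
\[
\mathbf{y}:=\left[\mathbf{w}^\intercal,\ s w_1\mathbf{1}_{n_2}^\intercal,\ldots, s w_{n_1}\mathbf{1}_{n_2}^\intercal\right]^\intercal ,
\]
with the scalar $s\neq 0$ to be fixed, and check the eigen-equation in two vertex types.

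At a copy vertex $v$ of $\mathcal{H}_2^{(i)}$, the entries are constant on the copy, so the $\mathcal{L}(\mathcal{H}_2)$ part contributes $d_{\mathcal{H}_2}(v)(sw_i)^{r-1}-d_{\mathcal{H}_2}(v)(sw_i)^{r-1}=0$. The $a$ edges through $u_i$ contribute $a(sw_i)^{r-1}-a\,w_i(sw_i)^{r-2}$. Equating this to $\mu(sw_i)^{r-1}$ forces $(a-\mu)s=a$, that is, $s=a/(a-\mu)$. This is where $\mu\neq a$ is needed.

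At $u_i$, we get $(\mathcal{L}(\mathcal{H}_1)\mathbf{w}^{r-1})_i + b\,w_i^{r-1}-b\,s^{r-1}w_i^{r-1}$. This equals $\mu w_i^{r-1}$ exactly when $\mu-\theta-b=-b\,a^{r-1}/(a-\mu)^{r-1}$. Clearing denominators gives precisely the stated polynomial, so each root $\mu_i\neq a$ gives an eigenpair $(\mu_i,\mathbf{y})$.

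\textbf{Second family.} For $\phi$, I would fix one copy index $i$ and let $\mathbf{y}$ equal $\mathbf{z}$ on $\mathcal{H}_2^{(i)}$, where $(\phi,\mathbf{z})$ is an eigenpair of $\mathcal{L}(\mathcal{H}_2)$, and zero elsewhere.

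At a copy vertex, every edge of $\mathcal{E}_i^\dagger$ contains $u_i$, whose entry is $0$, so these edges contribute only their degree term. This gives $(\phi+a)z_v^{r-1}$, as required. At vertices outside $\mathcal{W}_i$ everything is $0$.

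The delicate equation is at $u_i$. There the left side is $-\sum_{S\subseteq \mathcal{V}_i,|S|=r-1}\mathbf{z}^S$, and it must vanish. This is exactly the non-main condition \eqref{non-main_condition}. So I expect this step to be the main obstacle. I would prove the second claim for eigenpairs whose eigenvector can be chosen non-main; in particular this covers $\phi=0$, via the canonical-type vectors for $r\ge 3$ noted after the definition of main eigenvalues. Otherwise I would read the statement's "Hspan" as counting these non-main eigenvectors.

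Once that is granted, the $n_1$ choices of $i$ give $n_1$ eigenvectors with pairwise disjoint supports. This yields the multiplicity at least $n_1$.
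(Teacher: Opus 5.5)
Your proposal is correct and is the paper's own proof. The paper uses the same vector, with entries $\frac{a w_i}{a-\mu}$ on the $i$-th copy, for the first family. For the second claim it uses the same single-copy vectors, and it too only justifies them for a non-main $\phi$ with non-main eigenvector $\mathbf{z}$. So your restriction is exactly what is proved, and it cannot be dropped: for $r=2$, $K_1\odot K_2=K_3$ has Laplacian spectrum $\{0,3,3\}$, which does not contain $0+1$.

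The one slip is your aside that $\phi=0$ is covered by canonical vectors. For $r\ge 3$ one has $\mathcal{L}(\mathcal{H}_2)\mathbf{e}_k^{r-1}=d_{\mathcal{H}_2}(k)\,\mathbf{e}_k$, so canonical vectors are non-main eigenvectors for $\phi=d_{\mathcal{H}_2}(k)$, not for $0$; the paper's remark about canonical vectors concerns $\mathcal{A}$. In fact $0$ can be a main Laplacian eigenvalue, for example for $\mathcal{K}_{n_2}^{(r)}$ with $n_2>r$, whose only $0$-eigenvectors are multiples of $\mathbf{1}$.
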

	\begin{proof}
		Let $[(n_2+1)n_1]$ be the vertex set of $\mathcal{H} = \mathcal{H}_1 \odot \mathcal{H}_2$, the corona of $\mathcal{H}_1$ with $\mathcal{H}_2$.
		Without loss of generality, assume that $[n_1]$ is the vertex subset corresponding to $\mathcal{H}_1$ and $\{  n_1 + (j-1)n_2+ 1, n_1 + (j-1)n_2+ 2,\ldots, n_1+jn_2\}$ be the vertices corresponding to the $j^{th}$ ($1 \leq j \leq n_1$) copy of $\mathcal{H}_2$ in $\mathcal{H}$.
		Suppose that $(\theta, \mathbf{x})$ is an eigenpair of $\mathcal{L}(\mathcal{H}_1)$, and $\mu = \mu_i (\neq \binom{n_2-1}{r-2})$ is a root of the polynomial 
		$\left( \mu-\theta-\binom{n_2}{r-1}\right)\left(\binom{n_2-1}{r-2}-\mu \right)^{r-1} + \binom{n_2}{r-1}\binom{n_2-1}{r-2}^{r-1}.$
		Define $\mathbf{y} = \mathbf{y}^{(i)} := \left[ \mathbf{x}^\intercal, \frac{\binom{n_2-1}{r-2} x_1}{\binom{n_2-1}{r-2}-\mu_i}\mathbf{1}_{n_2}^\intercal, \ldots , \frac{\binom{n_2-1}{r-2}x_{n_1}}{\binom{n_2-1}{r-2} - \mu_i}\mathbf{1}_{n_2}^\intercal \right].$
		Then for any $j \in [n_1],$
		\begin{align*}
			\mu_i y_j^{r-1} - \left( \mathcal{L}(\mathcal{H} \mathbf{y}^{r-1}) \right)_j   &=  \mu_i y_j^{r-1}-d_{\mathcal{H}}(j) y_j^{r-1} + \sum\limits_{e \in \mathcal{E}_j(\mathcal{H})} \mathbf{y}^{ e \setminus \{j \} }\\
			&= \left(\mu_i-\theta-\binom{n_2}{r-1}\right)x_j^{r-1} + \binom{n_2}{r-1} \left( \frac{\binom{n_2-1}{r-2} x_j}{\binom{n_2-1}{r-2}-\mu_i}\right)^{r-1}=0
		\end{align*}
		Also, for any $k \in \{ j n_1+ 1, j n_1+2,\ldots, (j+1)n_1\},$ $j \in [n_1]$,
		\begin{align*}
			\mu_i y_k^{r-1} - \left( \mathcal{L}(\mathcal{H} \mathbf{y}^{r-1}) \right)_k   &=  \mu_i y_k^{r-1} - d_{\mathcal{H}}(k) y_k^{r-1} + \sum\limits_{e \in \mathcal{E}_k(\mathcal{H})} \mathbf{y}^{ e \setminus \{ k\} } \\
			&= \mu_i \frac{\binom{n_2-1}{r-2}}{\binom{n_2-1}{r-2}-\mu_i} x_j^{r-1} - \left(d_{\mathcal{H}_2}(k) + \binom{n_2-1}{r-2}\right) \left(\frac{\binom{n_2-1}{r-2}}{\binom{n_2-1}{r-2}-\mu_i}\right)^{r-1} x_j^{r-1}  \\
			&~~~~~~~~~~~~~+ d_{\mathcal{H}_2}(k)\left( \frac{\binom{n_2-1}{r-2}}{\binom{n_2-1}{r-2}-\mu_i}  \right)^{r-1} + \binom{n_2-1}{r-2} \left(\frac{\binom{n_2-1}{r-2}}{\binom{n_2-1}{r-2}-\mu_i}\right)^{r-2} x_j^{r-1} =0
		\end{align*}
		\noindent Note that, suppose $\phi$ is a non-main eigenvalue of $\mathcal{L}(\mathcal{H}_2)$ with non-main eigenvector $\mathbf{z}$, then $\phi+\binom{n_2-1}{r-2}$ is also an eigenvalue of $\mathcal{L}(\mathcal{H})$ with eigenvectors
		$\left[\begin{matrix}
			\mathbf{0} \\ \mathbf{z} \otimes \mathbf{e_j} 
		\end{matrix}\right], 1 \leq j \leq n_1.$
	\end{proof}

	\begin{theorem}
		Given an $r$-uniform hypergraph $\mathcal{H}_1$ on $n_1$ vertices and a $d$-regular, $r$-uniform hypergraph $\mathcal{H}_2$ on $n_2$ vertices, let $\mathcal{H}= \mathcal{H}_1 \odot \mathcal{H}_2. $
		\begin{itemize}
			\item If $\lambda$ is an eigenvalue of $\mathcal{A}(\mathcal{H}_1)$, then $\mu_i ~(\neq d)$ are the eigenvalues of $\mathcal{A}(\mathcal{H})$, 
			where $\mu_i, 1\leq i \leq r$ are the roots of the polynomial $(\mu -\lambda)(\mu-d)^{r-1} - \binom{n_2}{r-1}\binom{n_2-1}{r-2}^{r-1}$.
			Furthermore, if $\eta_i$ is a non-main eigenvalue of $\mathcal{A}(\mathcal{H}_2)$, then $\eta_i$ is also an eigenvalue of $\mathcal{A}(\mathcal{H})$ with Hspan multiplicity at least $n_1$.
			\item If $\beta$ is an eigenvalue of $\mathcal{Q}(\mathcal{H}_1)$, then $\mu_i ~(\neq 2d+\binom{n_2-1}{r-1})$ are the eigenvalues of $\mathcal{Q}(\mathcal{H})$, where $\mu_i, 1\leq i \leq r$ are the roots of the polynomial 
			$$\left(\mu-\beta-\binom{n_2}{r-1}\right)\left(\mu-2d-\binom{n_2-1}{r-1}\right)^{r-1}-\binom{n_2}{r-1}\binom{n_2-1}{r-2}^{r-1}.$$
			Furthermore, if $\alpha$ is a non-main eigenvalue of $\mathcal{Q}(\mathcal{H}_2)$, then $\alpha + \binom{n_2-1}{r-2}$ is also an eigenvalue of $\mathcal{Q}(\mathcal{H})$ with Hspan multiplicity at least $n_1$.
		\end{itemize}
	\end{theorem}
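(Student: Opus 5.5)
The plan is to follow the eigenvector ansatz used in Theorem \ref{Corona_one_edge} and in the preceding Laplacian corona result. Label the vertices as before: $[n_1]$ for $\mathcal{H}_1$, and the $j$-th copy of $\mathcal{H}_2$ attached to vertex $j$. Given an eigenpair $(\lambda,\mathbf{x})$ of $\mathcal{A}(\mathcal{H}_1)$, I will look for an eigenvector of $\mathcal{A}(\mathcal{H})$ of the form
$$\mathbf{y}=\left[\mathbf{x}^\intercal,\ c\,x_1\mathbf{1}_{n_2}^\intercal,\ \ldots,\ c\,x_{n_1}\mathbf{1}_{n_2}^\intercal\right]^\intercal,$$
with a scalar $c\neq 0$ to be determined.

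\textbf{Adjacency case: the two eigen-equations.} The key combinatorial count is the set of hyperedges through each vertex type.
\begin{itemize}
\item A vertex $j\in[n_1]$ lies in its $\mathcal{H}_1$-edges and in the $\binom{n_2}{r-1}$ edges of $\mathcal{E}_j^\dagger$. Each edge of $\mathcal{E}_j^\dagger$ contributes $(c x_j)^{r-1}$, so the $j$-th equation becomes $\mu = \lambda + \binom{n_2}{r-1}c^{r-1}$ after dividing by $x_j^{r-1}$.
\item A vertex $k$ in the $j$-th copy lies in $d$ edges of $\mathcal{H}_2$ (each contributing $(cx_j)^{r-1}$), using $d$-regularity. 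It also lies in $\binom{n_2-1}{r-2}$ edges containing $u_j$ (each contributing $x_j(cx_j)^{r-2}$). Hence $\mu c^{r-1}=d c^{r-1}+\binom{n_2-1}{r-2}c^{r-2}$.
\end{itemize}
The second equation forces $c=\binom{n_2-1}{r-2}/(\mu-d)$, which requires $\mu\neq d$. Substituting into the first and clearing denominators gives exactly $(\mu-\lambda)(\mu-d)^{r-1}-\binom{n_2}{r-1}\binom{n_2-1}{r-2}^{r-1}=0$. So every root $\mu_i\neq d$ yields an eigenpair $(\mu_i,\mathbf{y}^{(i)})$. If some $x_j=0$, both equations at that block hold trivially, so no case split is needed.

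\textbf{Signless Laplacian case.} The same ansatz applies, with degrees now included.
\begin{itemize}
\item Vertex $j$ has degree $d_{\mathcal{H}_1}(j)+\binom{n_2}{r-1}$, giving $\mu-\beta-\binom{n_2}{r-1}=\binom{n_2}{r-1}c^{r-1}$.
\item A copy vertex has degree $d+\binom{n_2-1}{r-2}$, giving $c=\binom{n_2-1}{r-2}/\bigl(\mu-2d-\binom{n_2-1}{r-2}\bigr)$.
\end{itemize}
This yields the stated polynomial with the excluded value being the pole of $c$. I expect the main obstacle to be bookkeeping this constant correctly. My computation gives $\binom{n_2-1}{r-2}$ in that factor, while the statement has $\binom{n_2-1}{r-1}$. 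I would recheck the degree of a copy vertex carefully and, if the discrepancy persists, state the version the computation supports.

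\textbf{Non-main part and multiplicity.} Let $\mathbf{z}$ be a non-main eigenvector of $\mathcal{A}(\mathcal{H}_2)$ for $\eta_i$. Place $\mathbf{z}$ on the $j$-th copy and zeros elsewhere.
\begin{itemize}
\item At $u_j$, the sum over $\mathcal{E}_j^\dagger$ equals $\sum_{|S|=r-1}\mathbf{z}^S=0$ by \eqref{non-main_condition}.
\item At the other vertices of $[n_1]$, every term vanishes.
\item At the copy vertices, any edge through $u_j$ contains the zero coordinate $y_{u_j}$. The equation therefore reduces to that of $\mathcal{A}(\mathcal{H}_2)$.
\end{itemize}
For $\mathcal{Q}$, the copy vertices carry the extra degree $\binom{n_2-1}{r-2}$ on the diagonal, which shifts the eigenvalue to $\alpha+\binom{n_2-1}{r-2}$. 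The $n_1$ vectors obtained from the $n_1$ copies have disjoint supports, so they are linearly independent. This gives Hspan multiplicity at least $n_1$.
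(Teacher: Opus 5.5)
Your proposal is correct and follows the paper's proof essentially step for step: the same block-constant ansatz with $c=\binom{n_2-1}{r-2}/(\mu-d)$ (resp.\ $c=\binom{n_2-1}{r-2}/(\mu-2d-\binom{n_2-1}{r-2})$), and the same non-main eigenvectors supported on a single copy of $\mathcal{H}_2$. Your suspicion about the constant is justified, since the paper's own proof of the signless Laplacian case also uses $2d+\binom{n_2-1}{r-2}$; the $\binom{n_2-1}{r-1}$ in the statement is a misprint, and the version your computation gives is the correct one.
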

	\begin{proof}
		Let $[(n_2+1)n_1]$ be the vertex set of $\mathcal{H} = \mathcal{H}_1 \odot \mathcal{H}_2$, the corona of $\mathcal{H}_1$ with $\mathcal{H}_2$ ($d$-regular). Without loss of generality, assume that $[n_1]$ is the vertex subset corresponding to $\mathcal{H}_1$ and $\{  n_1 + (j-1)n_2+ 1, n_1 + (j-1)n_2+ 2,\ldots, n_1+jn_2\}$ be the vertices corresponding to the $j^{th}$ ($1 \leq j \leq n_1$) copy of $\mathcal{H}_2$ in $\mathcal{H}$. Suppose that $(\lambda,\mathbf{x})$ is an eigenpair of $\mathcal{A}(\mathcal{H}_1)$, and $\mu=\mu_i \neq d$ are the roots of the polynomial $(\mu-\lambda)(\mu-d)^{r-1}-\binom{n_2}{r-1}\binom{n_2-1}{r-2}^{r-1}$.
		Define $\mathbf{y}=\mathbf{y}^{(i)}:=\left[\mathbf{x}^\intercal, \frac{\binom{n_2-1}{r-2}x_1}{\mu_i-d} \mathbf{1}_{n_2}^\intercal, \frac{\binom{n_2-1}{r-2}x_2}{\mu_i-d} \mathbf{1}_{n_2}^\intercal, \ldots,  \frac{\binom{n_2-1}{r-2}x_{n_1}}{\mu_i-d} \mathbf{1}_{n_2}^\intercal \right]^{\intercal}$.
		We claim that $(\mu, \mathbf{y})$ 
		forms an eigenpair of $\mathcal{A}(\mathcal{H})$. 
		For any $j \in [n_1]$, we have 
		\begin{align*}
			\mu_i y_j^{r-1}- (\mathcal{A}(\mathcal{H})\mathbf{y}^{r-1})_j 
			&=  \mu_i x_j^{r-1}-\sum\limits_{e \in \mathcal{E}(\mathcal{H}_1)} \mathbf{x}^{e\setminus \{ j \}} - \sum\limits_{\substack{S \subseteq \{ n_1 + (j-1)n_2+ 1,\ldots, n_1+jn_2\}\\ |S|=r-1}} \mathbf{y}^{S} \\
			&=  \mu_i x_j^{r-1}-\lambda x_j^{r-1} - \frac{\binom{n_2}{r-1}\binom{n_2-1}{r-2}^{r-1} x_j^{r-1}}{(\mu_i-d)^{r-1}} =0.
		\end{align*}
		For any $k \in \{ j n_1+ 1, j n_1+2,\ldots, (j+1)n_1\},$ $j \in [n_1]$, we have 
		\begin{align*}
			\mu_i y_k^{r-1} - (\mathcal{A}(\mathcal{H})\mathbf{y}^{r-1})_k 
			&= \mu_i \frac{\binom{n_2-1}{r-2}^{r-1} x_j^{r-1}}{(\mu_i-d)^{r-1}} - \sum\limits_{e \in \mathcal{E}(\mathcal{H}_1)} \mathbf{y}^{e\setminus \{ j \}} - \sum\limits_{e \in \mathcal{E}(\mathcal{H}) \setminus \mathcal{E}(\mathcal{H}_1)} \mathbf{y}^{e\setminus \{ j \}} \\
			&= \mu \frac{\binom{n_2-1}{r-2}^{r-1} x_j^{r-1}}{(\mu-d)^{r-1}}- \frac{d\binom{n_2-1}{r-2}^{r-1} x_j^{r-1}}{(\mu-d)^{r-1}} - \frac{\binom{n_2-1}{r-2}^{r-1} x_j^{r-1}}{(\mu-d)^{r-2}} = 0.
		\end{align*}
		Furthermore, if there exist an eigenvector $\mathbf{z}$ of $\mathcal{A}(\mathcal{H}_2)$ associated with the eigenvalue $\eta$ satisfying Equation \ref{non-main_condition}, then we can see that $\eta$ is also an eigenvalue of $\mathcal{A}(\mathcal{H})$ with corresponding eigenvectors $\left[\begin{matrix}
			\mathbf{0} \\ \mathbf{z} \otimes \mathbf{e_j} 
		\end{matrix}\right], 1 \leq j \leq n_1$, where $e_j, 1 \leq j \leq n_1$ are the canonical vectors of length $n_1$.  
		
		Now, suppose that $(\beta, \mathbf{w})$ is an eigenpair of $\mathcal{Q}(\mathcal{H}_1)$, and $\mu = \mu_i~ ( \neq 2d+\binom{n_2-1}{r-2} )$ is a root of the polynomial \\ $\left( \mu-\beta-\binom{n_2}{r-1} \right)\left( \mu-2d-\binom{n_2-1}{r-2} \right)^{r-1}-\binom{n_2}{r-1} \binom{n_2-1}{r-2}$. Define $\mathbf{y}=\mathbf{y}^{(i)} := \left[ \mathbf{w}^\intercal, \frac{\binom{n_2-1}{r-2}w_1}{\mu_i-2d-\binom{n_2-1}{r-2}}\mathbf{1}_{n_2}^{\intercal}, \ldots , \frac{\binom{n_2-1}{r-2}w_{n_1}}{\mu_i-2d-\binom{n_2-1}{r-2} }\mathbf{1}_{n_2}^{\intercal} \right]^\intercal$. We claim that $(\mu_i, \mathbf{y}^{(i)})$ forms an eigenpair for $\mathcal{Q}(\mathcal{H})$.
		It is important to note that for any vertex $j \in [n_1]$,  $d_{\mathcal{H}}(j) = d_{\mathcal{H}_1}(j) + \binom{n_2}{r-1},$ and for any vertex $k \in [n_1(n_2+1)] \setminus [n_1],$ $d_{\mathcal{H}}(k)= d_{\mathcal{H}_2}(k) + \binom{n_2-1}{r-2}.$ 
		Proof of the claim is similar to the case of adjacency hypermatrix.
		
		\noindent Suppose $\eta$ is a non-main eigenvalue of $\mathcal{Q}(\mathcal{H}_2)$ with non-main eigenvector $\mathbf{z}$, then $\eta+\binom{n_2-1}{r-2}$ is also an eigenvalue of $\mathcal{Q}(\mathcal{H})$ with eigenvectors
		$\left[\begin{matrix}
			\mathbf{0} \\ \mathbf{z} \otimes \mathbf{e_j} 
		\end{matrix}\right], 1 \leq j \leq n_1.$
	\end{proof}

	\section{Reciprocal Eigenvalue Property of Hypertrees}
	
	In this section, we generalize the study of reciprocal eigenvalue property for uniform hypergraphs, and we obtain the negative result in the case of power hypertrees.
	
	Since zero is an eigenvalue of every $r$-uniform hypergraph for $r \geq3$, we say that  a hypergraph $\mathcal{H}$ satisfies the \emph{reciprocal eigenvalue property} ((R) property) if for each $0 \neq \mu \in \sigma(\mathcal{H})$, there exist $\mu' \in \sigma(\mathcal{H})$ such that $\mu \mu' =1.$
	
	Similarly, we say that a hypergraph $\mathcal{H}$ satisfies the \emph{strong reciprocal eigenvalue property} ((SR) property) if for each $0 \neq \mu \in \sigma(\mathcal{H})$ with multiplicity $m(\mu)$, there exist $\mu' \in \sigma(\mathcal{H})$ with multiplicity $m(\mu')= m(\mu)$  such that $\mu \mu' = 1.$

	\begin{remark}
		The power hypergraph of a corona graph need not satisfy the reciprocal eigenvalue property.
	\end{remark}
	
	\begin{example}
		We have 
		\begin{equation*}
			\Phi_{P_3^{(3)}}(\lambda)= \lambda^{259}(\lambda^3-1)^{27}(\lambda^3-2)^{18}(\lambda^6-3\lambda^3+1)^9, 
		\end{equation*}
		\begin{equation*}
			\Phi_{P_3^{(4)}}(\lambda)= \lambda^{95774}(\lambda^4-1)^{11440}(\lambda^4-2)^{5632}(\lambda^8-3\lambda^4+1)^{4096}. 
		\end{equation*}
	\end{example}

	A signed graph is a graph together with a sign assigned to every edges of the graph. Formally, a signed graph is a triple $G_{\pi} = (V,E,\pi)$, with $V$ and $E$ being the vertex and edge set of the graph and $\pi$ is a function from $E$ to $\{ +1,-1 \}.$

	\begin{theorem}\cite{chen2023all}\label{eigenvalues_of_power}
		The $r$-power hypergraph of a graph $G$, denoted by $G^r$  has an eigenvalue $\lambda$ if and only if 
		\begin{itemize}
			\item some signed induced subgraph of $G$ has an eigenvalue $\beta$ such that $\beta^2 = \lambda^r$, if $r =3$;
			\item some signed subgraph of $G$ has an eigenvalue $\beta$ such that $\beta^2= \lambda^r$, if $r \geq 4$.
		\end{itemize}
	\end{theorem}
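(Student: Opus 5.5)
The plan is to pass between eigenvectors of $\mathcal{A}(G^r)$ and eigenvectors of signed subgraphs of $G$ through the eigen-equations at cored vertices, treating the two implications separately. Recall that every edge $\{u,v\}$ of $G$ becomes a hyperedge $e=\{u,v,w_1,\ldots,w_{r-2}\}$ of $G^r$, where $w_1,\ldots,w_{r-2}$ are cored vertices of degree one. The case $\lambda=0$ is dismissed directly: $0$ is always an eigenvalue of $G^r$ for $r\geq 3$, and $\beta=0$ occurs for the signed induced subgraph on a single vertex. So I assume $\lambda\neq 0$ throughout.

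\emph{Forward direction.} Let $(\lambda,\mathbf{x})$ be an eigenpair and put $\pi_e=\prod_{t\in e}x_t$ for each hyperedge $e$.
\begin{itemize}
\item At a cored vertex $w\in e$ the eigen-equation is $\lambda x_w^{r-1}=\mathbf{x}^{e\setminus\{w\}}$. Multiplying by $x_w$ gives $x_w^r=\pi_e/\lambda$ for every cored $w$ of $e$.
\item Write $c_e$ for the product of the cored entries of $e$, so $\pi_e=x_ux_vc_e$. Then $c_e^r=(\pi_e/\lambda)^{r-2}$. Raising $\pi_e=x_ux_vc_e$ to the $r$-th power yields $\pi_e^{r}=x_u^rx_v^r(\pi_e/\lambda)^{r-2}$.
\item Whenever $\pi_e\neq 0$ this simplifies to $\pi_e^2=\lambda^{2-r}x_u^rx_v^r$.
\item Fix a square root $y_u=x_u^{r/2}$ for each core vertex $u$. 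Then $\pi_e=s_e\lambda^{1-r/2}y_uy_v$ with $s_e\in\{\pm1\}$.
\item At a core vertex $u$ the equation $\lambda x_u^{r-1}=\sum_{e\ni u}\mathbf{x}^{e\setminus\{u\}}$, multiplied by $x_u$, becomes $\lambda x_u^r=\sum_{e\ni u}\pi_e$. This turns into $\lambda^{r/2}y_u=\sum_{v}s_{uv}y_v$, summing over edges with $\pi_{uv}\neq0$.
\end{itemize}
Let $G'$ be the signed subgraph on the support of $\mathbf{y}$ with the edges having $\pi_e\neq 0$, signed by $s_e$. Then $\mathbf{y}$ is an eigenvector of $G'$ with eigenvalue $\beta=\lambda^{r/2}$, so $\beta^2=\lambda^r$.

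For $r=3$ there is a single cored vertex, and $x_w^3=\pi_e/\lambda$ forces $x_w=0$ iff $\pi_e=0$. Hence $\pi_e=0$ iff $x_ux_v=0$, so every edge of $G$ between two support vertices survives in $G'$. Therefore $G'$ is induced.

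\emph{Converse.} Start from a signed (induced, if $r=3$) subgraph $G'$ with eigenpair $(\beta,\mathbf{y})$, and restrict to the support of $\mathbf{y}$. For $r=3$ this restriction is still an induced subgraph of $G$, since edges of $G$ between support vertices already lie in $G'$. Then proceed as follows.
\begin{itemize}
\item Choose $\lambda$ with $\lambda^{r/2}=\beta$, and choose $x_u$ with $x_u^{r/2}=y_u$ on core vertices; set $x_u=0$ off the support.
\item For edges of $G$ not in $G'$, set all cored entries to $0$. For $r\geq4$ this is consistent because both sides of each cored equation vanish. For $r=3$ such edges have $x_ux_v=0$, so setting $x_w=0$ is again consistent.
\item For edges in $G'$, choose the cored entries so that each satisfies $x_w^r=\pi_e/\lambda$ with the prescribed $\pi_e=s_e\lambda^{1-r/2}y_uy_v$.
\end{itemize}
Reversing the computation above then verifies both families of eigen-equations.

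\emph{Main obstacle.} The step I expect to be hardest is this last choice of cored entries. One needs $r-2$ numbers, all of whose $r$-th powers equal $\pi_e/\lambda$, and whose product equals $\pi_e/(x_ux_v)$. I would first take a common $r$-th root $\rho$ of $\pi_e/\lambda$ for all cored vertices of $e$. Using $\pi_e^2=\lambda^{2-r}x_u^rx_v^r$, one checks that $\rho^{r-2}$ and the target $\pi_e/(x_ux_v)$ have the same $r$-th power. So they differ by an $r$-th root of unity, which can be absorbed by rescaling a single cored entry. When $r=3$ this amounts to choosing the sign in $x_w^2=x_ux_v/\lambda$, together with the choice of the square roots defining $y$.
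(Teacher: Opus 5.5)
The paper does not prove this statement. It is quoted from \cite{chen2023all} and used as a black box, so there is no internal argument to compare with. Your proposal is a self-contained eigenvector-correspondence proof, and it checks out. The cored-vertex identity $\lambda x_w^r=\pi_e$, the relation $\pi_e^2=\lambda^{2-r}x_u^rx_v^r$ for $\pi_e\neq 0$, and the core-vertex identity $\lambda x_u^r=\sum_{e\ni u}\pi_e$ give the passage $\mathbf{x}\mapsto\mathbf{y}$. In the converse, the root-of-unity correction on a single cored entry does resolve the consistency issue you flagged: both $\rho^{r-2}$ and $\pi_e/(x_ux_v)$ have $r$-th power $(\pi_e/\lambda)^{r-2}$. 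Your route also shows exactly where the induced/non-induced dichotomy comes from. With one cored vertex, the equation $\lambda x_w^2=x_ux_v$ forbids $x_w=0$ when $x_ux_v\neq 0$. With $r-2\geq 2$ cored vertices, all of them can vanish simultaneously, which deletes the edge.

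A few points should be tightened; none affects validity.
\begin{itemize}
\item \emph{The $r=3$ inducedness step.} The relation $x_w^3=\pi_e/\lambda$ alone does not give $\pi_e=0\Rightarrow x_ux_v=0$, since it holds with $x_w=0$ for any $x_u,x_v$. You need the unmultiplied equation $\lambda x_w^2=x_ux_v$, which gives $x_ux_v\neq 0\Rightarrow x_w\neq 0\Rightarrow \pi_e\neq 0$.
\item \emph{Nonempty support.} State that the support of $\mathbf{y}$ is nonempty. If $x_u=0$ on all core vertices, then every $\pi_e=0$, so $\lambda x_w^r=0$ forces $\mathbf{x}=\mathbf{0}$.
\item \emph{Branch of fractional powers.} Fix $\beta$ with $\beta^2=\lambda^r$ and read $\lambda^{1-r/2}$ as $\lambda/\beta$. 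Then $(\lambda^{1-r/2})^2=\lambda^{2-r}$ and $\lambda^{1-r/2}\lambda^{r/2}=\lambda$ hold consistently in both directions. In the converse, $\lambda$ is given rather than chosen; you simply designate $\lambda^{r/2}:=\beta$.
\item \emph{Core vertices outside the support in the converse.} The core equations at vertices with $x_u=0$ do not come from reversing the computation, since you cannot divide by $x_u$. They hold directly, because every edge through such a vertex has all its cored entries set to zero.
\end{itemize}
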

	
	\noindent Given a hypergraph $\mathcal{H}$ of order $n$ and size $m$, the matching polynomial of $\mathcal{H}$ in $\lambda$, denoted by $\varphi_{\mathcal{H}}(\lambda)$ is defined as 
	$$\varphi_{\mathcal{H}}(\lambda) = \sum\limits_{k \geq 0} (-1)^k m_k(\mathcal{H}) \lambda^{n-kr},$$ where $m_k(\mathcal{H})$ is the number of $k$-matchings in $\mathcal{H}$.
	
	\begin{theorem}\cite{li2024relationship}\label{relation_between_matching_characteristic}
		For a $r$-uniform hypertree $\mathcal{T}$ on $m$ hyperedges with $r\geq 2$,
		$$\Phi_{\mathcal{T}}(\lambda)= \prod\limits_{\substack{{H} \text{ is a labeled} \\ \text{ connected subhypergraph of } \mathcal{T}}} [\varphi_{H}(\lambda)]^{a_{{H}}},$$
		where  
		$a_H=(r-1)^{(r-1)(m-e(H)-|\partial(H)|)} r^{(r-2)(e(H))} ((r-1)^{r-1}-r^{r-2})^{|\partial(H)|},$ where $\partial(H) $ is the set of edges of $\mathcal{T}$ that are incident with the vertices of both $\mathcal{V}(\mathcal{H})$ and $\mathcal{V}(\mathcal{T})\setminus \mathcal{V}(H)$.
	\end{theorem}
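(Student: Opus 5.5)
We only need to show that the two sides have the same power sums of roots, i.e.\ the same traces, and we will argue at the level of these traces.

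\textbf{Step 1: reduce to traces.} Recall the Shao--Qi--Hu / Morozov--Shakirov expansion: for an order $r$, dimension $N$ hypermatrix $\mathcal{A}$,
\[
\Phi(\lambda)=\lambda^{N(r-1)^{N-1}}\exp\Big(-\sum_{d\ge1}\tfrac{1}{d}\,\mathrm{Tr}_d(\mathcal{A})\,\lambda^{-d}\Big).
\]
It suffices to prove two things: the degrees agree, and for every $d$ the number $\mathrm{Tr}_d(\mathcal{A}(\mathcal{T}))$ equals $\sum_H a_H\, p_d(\varphi_H)$. Here $p_d(\varphi_H)$ denotes the $d$-th power sum of the roots of $\varphi_H$, which is the logarithmic-derivative coefficient of $\varphi_H$.

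For the degree check we need $\sum_H a_H\,|\mathcal{V}(H)| = N(r-1)^{N-1}$ with $N=m(r-1)+1$. We count by choosing a vertex $v$ and grouping the subhypertrees $H\ni v$. The weight then factors edge by edge: each edge is either in $H$ (weight $r^{r-2}$ per edge), outside $H$ and away from the boundary, or in $\partial(H)$. The factorisation should reduce to the identity $(r-1)^{r-1}=r^{r-2}+\big((r-1)^{r-1}-r^{r-2}\big)$. This is exactly the identity that explains the shape of $a_H$, so I would verify it first as a sanity check.

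\textbf{Step 2: compute $\mathrm{Tr}_d$ on a hypertree.} I would use the Clark--Cooper / Chen--Bu combinatorial formula, which writes $\mathrm{Tr}_d$ as a weighted count of closed ``Veblen'' edge-multisets (rooted walks) of length $d$. In a hypertree, an edge multiset contributes only if every edge of its support appears with multiplicity divisible by $r$ in an appropriate sense. Its support is a connected subhypertree $H$.

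Group the terms by this support $H$. The contribution then splits into two factors. The first is an intrinsic count on $H$; for every $H$ this should be proportional to the closed tree-like walk count. That count is precisely $p_d(\varphi_H)$, since for forests (hyperforests) the matching polynomial's power sums count closed tree-like walks, a Godsil-type identity that I would prove separately for hypertrees by induction on pendant edges. The second factor is a weight coming from the remaining edges of $\mathcal{T}$, arising from the choices of the rooted-tree/Euler data in the trace formula. This weight is $(r-1)^{r-1}$ per edge not in $H$ and $r^{r-2}$ per edge of $H$.

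\textbf{Step 3: Möbius inversion over supports.} The raw weights in Step~2 count walks whose support is \emph{contained} in $H$, not equal to it. I would apply inclusion--exclusion over the edges adjacent to $H$. An edge of $\partial(H)$ contributes $(r-1)^{r-1}-r^{r-2}$, because it is excluded from the support but still touches $H$. Edges far from $H$ keep the factor $(r-1)^{r-1}$. This should reproduce $a_H$ exactly.

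\textbf{Main obstacle.} The hardest part is Step~2: showing that the intrinsic count on a fixed support $H$ is exactly $p_d(\varphi_H)$ with uniform normalisation, so that all multinomial factors coming from the trace formula collapse into the $r^{(r-2)e(H)}$ term. I would first check this for a single edge, $H=K_r^{(r)}$ with $\varphi_H=\lambda^r-1$, and then induct by attaching pendant edges. The induction uses the recursion $\varphi_{H}=\lambda^{r-1}\varphi_{H-e}-\varphi_{H-V(e)}$ for a pendant edge $e$, together with the corresponding walk-decomposition at the attaching vertex.
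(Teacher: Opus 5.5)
The paper gives no proof of this statement; it is quoted from \cite{li2024relationship}. Your outline therefore has to stand on its own, and as written it does not.

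\textbf{What is right.} Your bookkeeping is sound.
Expanding $(r-1)^{(r-1)m}=\prod_e\bigl(r^{r-2}+((r-1)^{r-1}-r^{r-2})\bigr)$ and grouping each in/out choice by the component of ``in'' edges through $v$ gives $\sum_{H\ni v}a_H=(r-1)^{N-1}$, hence the degree match.
The same expansion gives $\sum_{H\supseteq H'}a_H=r^{(r-2)e(H')}(r-1)^{(r-1)(m-e(H'))}$.
It is also true that only connected supports with every edge multiplicity divisible by $r$ contribute to $\mathrm{Tr}_d$.
Finally, the global factor $(r-1)^{N-1}$ in the trace formula makes the contribution of exact support $H$ in $\mathcal{T}$ equal to $(r-1)^{(r-1)(m-e(H))}c_d(H)$, where $c_d(H)$ is the exact-support term computed in $H$ alone.

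\textbf{The gap.} Steps 2 and 3 are inconsistent as written, and once corrected, Step 2 is the whole theorem.
Grouped by exact support, the intrinsic factor cannot be $p_d(\varphi_H)$, because that counts walks whose support is merely contained in $H$.
For instance, for $\mathcal{S}_2^{(3)}$ and $d=3$ your Step 2 gives $36+36+54=126$, whereas $\mathrm{Tr}_3=r^{r-1}(r-1)^{N-r}|\mathcal{E}|=72$.
Write $p_d(\varphi_H)=\sum_{H'\subseteq H}q_d(H')$, with $q_d$ the exact-support part. The identity for $\sum_{H\supseteq H'}a_H$ then reduces the theorem to
\[
c_d(H)=r^{(r-2)e(H)}\,q_d(H)\qquad\text{for every hypertree } H .
\]
Conversely, both sides of the theorem have the form $\sum_H(r-1)^{(r-1)(m-e(H))}\cdot(\text{term intrinsic to }H)$, so induction on $e(\mathcal{T})$ shows the theorem implies this local identity.
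So Steps 1 and 3 are only bookkeeping, and all the content sits in the sentence you wrote as ``should be''.

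\textbf{What is missing.} Nothing in the proposal establishes that local identity. Two pieces are needed.

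(i) You need a walk model with a Godsil-type identity. Ordinary closed walks fail, since $p_{rk}(\lambda^r-1)=r$ for every $k$. Pyramids of heaps on the line graph of $H$ do work: $-\log\sum_k(-1)^k m_k(H)u^k=\sum_P u^{|P|}/|P|$, so $p_{rk}(\varphi_H)$ equals $r$ times the number of pyramids with $k$ pieces.

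(ii) You need an actual evaluation of the trace term for support $H$ with edge multiplicities $rk_e$.
The balance condition forces the multidigraph to be the block union of $k_e$ copies of the complete digraph on each edge.
BEST then gives the arborescence count times $\prod_x(\deg^+x-1)!$.
The arborescences factor over blocks as $\prod_e r^{r-2}k_e^{r-1}$; this is where $r^{(r-2)e(H)}$ originates.
You must then show that the remaining factorial and multinomial factors collapse exactly to the pyramid count.

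Your pendant-edge induction is not yet an argument. You would have to show that $c_d$ obeys the recursion mirroring $\varphi_H=\lambda^{r-1}\varphi_{H-e}-\varphi_{H-V(e)}$. That requires controlling how Euler circuits interleave at the attaching vertex, which is precisely the computation left undone.
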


	

	
	\begin{proposition}
		An $r$-power hypertree with $r\geq 3$ does {\bf not} satisfy weak reciprocal eigenvalue property.
	\end{proposition}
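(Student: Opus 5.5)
We will show that every $r$-power hypertree $\mathcal{T}=T^r$ with $r\geq 3$ and at least one edge has an explicit nonzero eigenvalue whose reciprocal is not an eigenvalue. The natural candidate is the spectral radius $\rho(\mathcal{T})$. The idea is to translate the eigenvalue condition into a condition on signed subgraphs of $T$ via Theorem \ref{eigenvalues_of_power}, and then use the fact that subgraphs of trees have tightly controlled spectra. Since $T$ is a tree, every signed subgraph of $T$ is a forest, and a signed forest is switching equivalent to the unsigned one, so it has the same spectrum. Hence Theorem \ref{eigenvalues_of_power} reduces to the following: $\lambda\neq 0$ is an eigenvalue of $\mathcal{T}$ if and only if $\lambda^r=\beta^2$ for some eigenvalue $\beta$ of some subforest $F$ of $T$ (an induced one when $r=3$; this does not matter below).

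\textbf{Step 1 (small eigenvalues).} Suppose $\lambda\neq0$ is an eigenvalue of $\mathcal{T}$. Then $\lambda^r=\beta^2$ with $\beta$ a nonzero eigenvalue of a forest $F$. Some component of $F$ is a tree $T'$ with at least one edge having $\beta$ as an eigenvalue. The product of the nonzero eigenvalues of a tree equals, up to sign, the number of perfect matchings of a maximum matching structure. Rather than rely on this, I would use the cleaner fact that the nonzero eigenvalues of a forest are algebraic integers closed under Galois conjugation. Consequently the product of $\beta^2$ over its conjugates is a positive integer.

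\textbf{Step 2 (the $\frac{1}{\lambda}$ test).} Take $\lambda=\rho(\mathcal{T})$. By Step 1 this equals $\rho(T)^{2/r}$, with $\rho(T)\geq1$, and $\rho(T)>1$ unless $T=K_2$. If $\frac{1}{\lambda}$ were an eigenvalue, then $\lambda^{-r}=\rho(T)^{-2}$ would equal $\beta^2$ for some eigenvalue $\beta$ of a subforest. So $|\beta|=\rho(T)^{-1}\leq 1$. Now $\beta^2$ is a nonzero algebraic integer all of whose conjugates are squares of eigenvalues of the same forest. I would get a contradiction from the fact that every tree with an edge has an eigenvalue of absolute value at least $1$ together with an integrality argument. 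This has two parts:
\begin{itemize}
\item For $T\neq K_2$: $\beta^2\cdot\rho(T)^2=1$, and both factors are algebraic integers. Hence $\rho(T)^2$ is a unit, so the product of the conjugates of $\rho(T)^2$ is $\pm1$.
\item For $T=K_2$: $\lambda=1$ and $\frac{1}{\lambda}=1$, so the statement needs a different witness. There I would instead use the explicit spectrum of the single edge ($\lambda^r=1$ together with $0$), and pick the eigenvalue $e^{2\pi i/r}$. Its reciprocal is $e^{-2\pi i/r}$, which is also an eigenvalue. So $K_2^r$ is a degenerate case that must either be excluded as trivial or handled by noting that it vacuously fails to be ``non-trivial''. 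The proof should state that the proposition concerns hypertrees with at least two edges.
\end{itemize}

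\textbf{Step 3 (main obstacle).} The hard step is excluding a unit $\rho(T)^2$ for a tree $T$ with at least two edges. Here $\rho(T)\geq\sqrt2$, and the conjugates of $\rho(T)^2$ are squares of eigenvalues of $T$. The cleanest route is to use the coefficients of the characteristic polynomial of $T$, which is a matching polynomial. The norm of $\rho(T)^2$ divides the product of all nonzero $\beta^2$, which equals $m_{\nu}(T)$, the number of maximum matchings, up to sign. This does not immediately force a non-unit, for example when the tree has a unique maximum matching. So I would fall back on the example-driven approach instead. The plan is to exhibit, for each tree with at least two edges, a specific nonzero eigenvalue of $\mathcal{T}$ whose reciprocal is absent, namely one coming from a $P_3$ subgraph: $\lambda^r=2$. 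Then $\frac{1}{\lambda}$ would require $\beta^2=\frac{1}{2}$, which is impossible because $\beta^2$ is an algebraic integer and $\frac{1}{2}$ is not. The same argument shows that any eigenvalue with $\lambda^r$ a rational integer $\geq2$ works, in agreement with the factors $(\lambda^r-2)$ in the Example. This makes the integrality argument the decisive and quite short step, and it is the version I would write.
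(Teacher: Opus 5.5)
Your final argument is correct and is essentially the paper's proof. Both use a witness eigenvalue with $\lambda^r=2$ coming from two adjacent edges, and both exclude its reciprocal because that would force $\beta^2=\tfrac12$ for an eigenvalue $\beta$ of a signed subgraph, contradicting integrality. The paper reads the witness off the factor $\lambda^{r-1}(\lambda^r-2)$ in Theorem~\ref{relation_between_matching_characteristic} and phrases integrality via the Rational Root Theorem, while you get it directly from $P_3\subseteq T$ through Theorem~\ref{eigenvalues_of_power}; for $r=3$ you should add that $P_3$ is automatically induced in a tree.

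You are also right that at least two edges are needed: a single hyperedge has nonzero spectrum equal to the $r$-th roots of unity, which is closed under inversion, and the paper's proof assumes $m\geq 2$ only tacitly through ``$t\geq 1$''. You were likewise right to abandon the spectral-radius route, which genuinely fails for $T=P_4$, where $\rho(P_4)^2=\tfrac{3+\sqrt5}{2}$ is a unit and $1/\rho(\mathcal{T})$ is an eigenvalue.
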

	\begin{proof}
		Uisng the notations from Theorem \ref{relation_between_matching_characteristic},  for any connected subhypergraph $H$ of a $r$-uniform hypertree with $r \geq 3$, note that 
		$a_H=0$ if and only if $r=2$.
		As we already have the relationship between the matching polynomial and the characteristic polynomial of a $r$-uniform hypertree, it can be observed that for $r \geq 3,$ $(\lambda^{2r-1}-2\lambda^{r-1})^{t}$ is always a factor in the characteristic polynomial of the hypertree with $t \geq 1$. Hence the proof follows by combining Theorem \ref{eigenvalues_of_power}, \ref{relation_between_matching_characteristic} and Rational Root Theorem.
	\end{proof}
	
	In the following result we show that the non-zero part of the spectrum of any $r$-uniform hypertree does not satisfy reciprocal eigenvalue property.
	
	\begin{proposition}
		Given an $r$-uniform hypertree $\mathcal{T}$ with $r \geq 3$ and $m \geq 2$, if $\eta$ is the nullity of $\mathcal{T}$, then $\Phi_{\mathcal{T}}(\lambda) / \lambda^{\eta}$ is {\bf not} palindromic.
	\end{proposition}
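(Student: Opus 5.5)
The plan is to read off the constant term of $\Phi_{\mathcal{T}}(\lambda)/\lambda^{\eta}$ from the factorization in Theorem \ref{relation_between_matching_characteristic} and show that its absolute value is at least $2$. A palindromic polynomial has its constant term equal to its leading coefficient. The characteristic polynomial of the adjacency hypermatrix is monic, since $\lambda$ appears only on the diagonal of the eigen-system. So $\Phi_{\mathcal{T}}(\lambda)/\lambda^{\eta}$ is monic, and it therefore suffices to show that its constant term is an integer different from $1$.

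First, the exponents. For $r \geq 3$ every exponent $a_H$ in Theorem \ref{relation_between_matching_characteristic} is a positive integer. The factors $(r-1)^{\cdots}$ and $r^{(r-2)e(H)}$ are positive. The remaining factor is $(r-1)^{r-1}-r^{r-2}>0$ for $r\ge 3$: it equals $8-3=5$ at $r=3$, and in general $(r-1)^{r-1} = r^{r-1}(1-1/r)^{r-1} \geq r^{r-1}/e > r^{r-2}$ once $r>e$. I would also check that the exponent $(r-1)(m-e(H)-|\partial(H)|)$ is nonnegative. This holds because $\mathcal{T}$ has $m$ edges and $E(H)$, $\partial(H)$ are disjoint subsets of them.

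Next, each matching polynomial factors as $\varphi_H(\lambda)=\lambda^{n_H-k_Hr}\psi_H(\lambda)$. Here $k_H$ is the matching number of $H$, and $\psi_H$ is an integer polynomial with constant term $(-1)^{k_H}m_{k_H}(H)\neq 0$. Hence
$$\Phi_{\mathcal{T}}(\lambda)=\lambda^{\eta}\prod_H \psi_H(\lambda)^{a_H},\qquad \eta=\sum_H a_H(n_H-k_Hr).$$
This $\eta$ is exactly the multiplicity of $0$ as a root, i.e. the nullity. The constant term of $\Phi_{\mathcal{T}}/\lambda^{\eta}$ is then $\pm\prod_H m_{k_H}(H)^{a_H}$, a product of positive integers.

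Finally, I need a single factor larger than $1$. Since $m\geq 2$ and $\mathcal{T}$ is connected, $\mathcal{T}$ contains a connected subhypergraph $H_0$ consisting of two hyperedges sharing a vertex. Because the hypertree is linear, they share exactly one vertex. Then $k_{H_0}=1$ and $m_1(H_0)=2$, so $\varphi_{H_0}(\lambda)=\lambda^{r-1}(\lambda^r-2)$, matching the factor noted in the preceding proposition. This contributes $2^{a_{H_0}}\geq 2$ to the absolute value of the constant term. Therefore the constant term is not $\pm1$, while the leading coefficient is $1$, so $\Phi_{\mathcal{T}}/\lambda^{\eta}$ is not palindromic.

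The main point to verify carefully is the normalization. I need the product formula of Theorem \ref{relation_between_matching_characteristic} to hold with $\Phi_{\mathcal{T}}$ monic and no extra scalar. I also need to confirm that the power of $\lambda$ extracted above really equals the nullity $\eta$. Both follow once one accepts that theorem as an exact identity of monic polynomials.
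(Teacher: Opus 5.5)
Your proof is correct and is essentially the paper's argument: the factor $\varphi_{H_0}(\lambda)=\lambda^{r-1}(\lambda^r-2)$ forces the constant term of the monic polynomial $\Phi_{\mathcal{T}}(\lambda)/\lambda^{\eta}$ to have absolute value at least $2$, and you supply details the paper leaves out (integrality and positivity of the $a_H$, $m_{k_H}(H)\neq 0$, and the sign, which the paper's ``$a_{n-\eta}\geq 2$'' glosses over). One arithmetic slip: at $r=3$ one has $(r-1)^{r-1}-r^{r-2}=4-3=1$, not $8-3=5$; positivity still holds, and your general bound via $(1-1/r)^{r-1}\geq 1/e$ already covers $r=3$.
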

	\begin{proof}
		Suppose that $\mathcal{T}$ is an $r$-uniform hypertree.
		Since $r \geq 3,$ we have that $(\lambda^{2r-1}-2\lambda^{r-1})$ is always a factor in the characteristic polynomial of the hypertree. 
		If $\eta$ is the nullity of the hypertree $\mathcal{T}$ and $a_{n-\eta}$ is the coefficient of $\lambda^{\eta}$ in the characteristic polynomial, then $a_{n-\eta} \geq 2$. Hence we have $\Phi_{\mathcal{T}}(\lambda) \neq \Phi_{\mathcal{T}}(\frac{1}{\lambda})$.
	\end{proof}

	\subsection{A Generalization of Reciprocal Eigenvalue Property for Hypergraphs}

	In view of the results obtained above, on relaxing the condition, we propose a generalization of reciprocal eigenvalue property for hypergraphs as follows:
	\begin{itemize}
		\item Find the maximal subset (multi-set) of the spectrum of  hypergraph satisfying the (R) property and (SR) property. 
	\end{itemize}
	
	\begin{remark}\label{char_match_remark}
		Spectrum of an $r$-uniform hypertree with $r \geq 3$ satisfies (SR) property if and only if the matching polynomial of every sub-hypertree of the hypertree is palindromic.
	\end{remark}
	
	Following are the simple observations using the above remark. 
	
	\begin{proposition}
		For $r\geq 3$ and $m \geq 3$, there does not exist any $r$-uniform hypertree with $m$ hyperedges such that $spec(\mathcal{T}) = spec(\mathcal{S}_m^{(r)})$ and $\mathcal{T} \not\cong \mathcal{S}_m^{(r)}.$
	\end{proposition}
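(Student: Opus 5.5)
We will show that the hyperstar $\mathcal{S}_m^{(r)}$ is characterized among $r$-uniform hypertrees with $m$ hyperedges by its spectrum. The tool is Theorem \ref{relation_between_matching_characteristic}, which writes $\Phi_{\mathcal{T}}(\lambda)$ as a product of matching polynomials of the connected sub-hypertrees $H$ of $\mathcal{T}$, with explicit exponents $a_H$.

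Suppose $\mathcal{T}$ is an $r$-uniform hypertree with $m$ hyperedges and $spec(\mathcal{T})=spec(\mathcal{S}_m^{(r)})$. Then the characteristic polynomials coincide, $\Phi_{\mathcal{T}}=\Phi_{\mathcal{S}_m^{(r)}}$, so every root occurs in both with the same multiplicity. Both hypertrees have $n=m(r-1)+1$ vertices, so the polynomials have the same degree.

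The next step is to compute the spectrum of $\mathcal{S}_m^{(r)}$. Its connected sub-hypertrees are the single vertices and the sub-stars $\mathcal{S}_k^{(r)}$ with $1\le k\le m$. In a star every pair of edges meets at the centre, so a sub-star has no $2$-matching. Hence the matching polynomial of $\mathcal{S}_k^{(r)}$ is
\begin{equation*}
\varphi_{\mathcal{S}_k^{(r)}}(\lambda)=\lambda^{k(r-1)+1}-k\lambda^{k(r-1)+1-r}.
\end{equation*}
Its nonzero roots satisfy $\lambda^r=k$. Consequently the nonzero spectrum of the star is contained in $\{\lambda:\lambda^r=k,\ 1\le k\le m\}$. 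In particular, the largest modulus of an eigenvalue is $m^{1/r}$, attained with the multiplicity coming from $a_{\mathcal{S}_m^{(r)}}$.

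The comparison with a general hypertree $\mathcal{T}\not\cong\mathcal{S}_m^{(r)}$ is the main step. Since $m\ge3$ and $\mathcal{T}$ is not a star, $\mathcal{T}$ contains two disjoint hyperedges. I plan to use the power hyperpath $P_3^{(r)}$, which has two disjoint edges joined by a middle edge, as a sub-hypertree $H$ of $\mathcal{T}$ with $a_H>0$; by the proposition above, $a_H>0$ whenever $r\ge3$. Its matching polynomial has the factor $\lambda^{2r}-3\lambda^r+1$, visible in the Example, whose roots satisfy $\lambda^r=(3\pm\sqrt5)/2$. Neither of these values is an integer, so neither lies in $\{1,\dots,m\}$. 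By Theorem \ref{relation_between_matching_characteristic} these roots divide $\Phi_{\mathcal{T}}$ but not $\Phi_{\mathcal{S}_m^{(r)}}$, which is the desired contradiction.

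The delicate point is ensuring that $P_3^{(r)}$ actually occurs as a connected sub-hypertree of $\mathcal{T}$. Take two disjoint edges of $\mathcal{T}$ at minimal distance. If they are joined by a single middle edge, this gives $P_3^{(r)}$. In a general non-star hypertree the path between two disjoint edges may be longer, but then a longer hyperpath $P_\ell^{(r)}$ appears. I would handle that case by using a hyperpath whose matching polynomial has a root with non-integral $r$-th power. Alternatively, one can choose the disjoint edges carefully: a non-star hypertree has an edge not through the centre of any edge–neighbourhood, and this yields a genuine $P_3^{(r)}$ (a shortest path between two disjoint edges has length $3$ in edge count). Checking that this sub-hypertree gets positive exponent $a_H$ in Theorem \ref{relation_between_matching_characteristic} is exactly where $r\ge3$ is used.
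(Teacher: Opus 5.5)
Your argument is correct, but it takes a different route from the paper. The paper settles the statement in one line by appealing to the nullity result it cites: the multiplicity of the eigenvalue $0$ of $\mathcal{S}_m^{(r)}$ characterizes the hyperstar among $r$-uniform hypertrees, so cospectrality forces $\mathcal{T}\cong\mathcal{S}_m^{(r)}$.

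You instead use the nonzero spectrum through Theorem \ref{relation_between_matching_characteristic}. The connected sub-hypertrees of the star are single vertices and sub-stars, so every nonzero eigenvalue of $\mathcal{S}_m^{(r)}$ satisfies $\lambda^r\in\{1,\dots,m\}$. A non-star hypertree contains a $\mathcal{P}_3^{(r)}$. Its matching polynomial $\lambda^{r-2}(\lambda^{2r}-3\lambda^r+1)$ divides $\Phi_{\mathcal{T}}$, since $a_H>0$ follows from $(r-1)^{r-1}>r^{r-2}$ for $r\ge 3$. It therefore yields eigenvalues with $\lambda^r=(3\pm\sqrt5)/2\notin\mathbb{Z}$. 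This is exactly the mechanism the paper uses in its next proposition.

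The paper's route is shorter and uses a single spectral invariant, but rests entirely on an external theorem. Yours is self-contained given Theorem \ref{relation_between_matching_characteristic}. It also shows that the \emph{set} of nonzero eigenvalues, ignoring multiplicities, already separates the hyperstar from every other hypertree.

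Three steps need tightening.

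\textbf{Two disjoint edges.} You assert that a non-star hypertree has two disjoint edges; this needs a sentence of justification. Hypertrees are linear and Berge-acyclic, so three pairwise-meeting edges must share a vertex; otherwise they form a Berge $3$-cycle. Hence if all edges pairwise meet, they all pass through the common vertex of any two of them, and the hypertree is a star.

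\textbf{The shortest-path claim.} Your parenthetical claim that a shortest path between two disjoint edges has three edges is false for an arbitrary disjoint pair. It is true for a disjoint pair at minimum distance. On a shortest chain $f_0,f_1,\dots,f_k$ in the edge-intersection graph one has $f_0\cap f_2=\emptyset$, so minimality forces $k=2$, and $f_0,f_1,f_2$ is a copy of $\mathcal{P}_3^{(r)}$. The detour through longer hyperpaths is therefore never needed.

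\textbf{The matching polynomial.} Compute $\varphi_{\mathcal{P}_3^{(r)}}$ directly from $m_1=3$ and $m_2=1$, rather than reading it off the Example, which only treats $r=3,4$.
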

	\begin{proof}
		The direct proof is by looking at the nullity of the hyperstar \cite{zheng2025uniform}, which characterizes it completely.
	\end{proof}

	\begin{proposition}
		For an $r$-uniform hypertree $\mathcal{T}$, if the maximal subset of the spectrum that is closed under inverse is $\{ {\zeta^i}^{(m(1))} | 0 \leq i \leq r-1, \zeta^r=1 \},$  then $\mathcal{T}$ is isomorphic to a hyperstar. 
		That is, for an $r$-uniform hypertree, if the only self-reciprocal factor of $\Phi_{\mathcal{T}}(\lambda)$ is $(\lambda^r-1)^{m(1)},$ then $\mathcal{T}$ is isomorphic  to a hyperstar. 
	\end{proposition}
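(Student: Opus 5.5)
We argue by contraposition. Assume $\mathcal{T}$ is an $r$-uniform hypertree with $m\geq 2$ hyperedges that is \emph{not} a hyperstar. We will exhibit a self-reciprocal factor of $\Phi_{\mathcal{T}}(\lambda)$ other than $\lambda^r-1$. Such a factor yields non-zero eigenvalues, not $r$-th roots of unity, whose inverses are again eigenvalues. This enlarges the inverse-closed part of the spectrum beyond $\{\zeta^i\}$.

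The tool is Theorem \ref{relation_between_matching_characteristic}. It writes $\Phi_{\mathcal{T}}(\lambda)$ as a product of $[\varphi_H(\lambda)]^{a_H}$ over labeled connected subhypergraphs $H$ of $\mathcal{T}$. The exponent $a_H$ is positive for $r\geq 3$, as noted in the proof of the preceding proposition. Hence every matching polynomial $\varphi_H$ of a connected sub-hypertree $H$ divides $\Phi_{\mathcal{T}}$.

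First step: compute the relevant matching polynomials.
\begin{itemize}
\item A single edge $H=K^{(r)}_r$ gives $\varphi_H=\lambda^r-1$. This is the factor allowed in the hypothesis.
\item The hyperstar $\mathcal{S}_k^{(r)}$ has pairwise intersecting edges, so $\varphi=\lambda^{k(r-1)+1}-k\lambda^{(k-1)(r-1)+1}$. After removing powers of $\lambda$ this is $\lambda^{r-1}-k$, which is not self-reciprocal for $k\geq 2$.
\item The key case is the loose path with three edges, $P_3^{(r)}$. It has $3r-2$ vertices, three $1$-matchings and one $2$-matching (the two end edges). So
\begin{equation*}
\varphi_{P_3^{(r)}}(\lambda)=\lambda^{3r-2}-3\lambda^{2r-2}+\lambda^{r-2}=\lambda^{r-2}\left(\lambda^{2r}-3\lambda^{r}+1\right).
\end{equation*}
\end{itemize}
The factor $\lambda^{2r}-3\lambda^r+1$ is palindromic, hence self-reciprocal. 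This agrees with the factor $\lambda^6-3\lambda^3+1$ in the Example. Its roots satisfy $\lambda^r=\frac{3\pm\sqrt5}{2}$, so they are non-zero, pairwise reciprocal, and not $r$-th roots of unity.

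Second step: a combinatorial lemma. A hypertree that is not a hyperstar contains a sub-hypertree isomorphic to $P_3^{(r)}$. By definition, a hypertree is a hyperstar exactly when all edges share a common vertex. Here I will use the hypertree properties that two edges meet in at most one vertex and there are no cycles. If every two edges intersect, acyclicity (a Helly-type property for hypertrees) forces a common vertex, giving a hyperstar. So there are disjoint edges $e,f$. Take the path in $\mathcal{T}$ from $e$ to $f$: its edges form a loose path with at least three edges. Any three consecutive edges of it form a connected sub-hypertree isomorphic to $P_3^{(r)}$.

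Combining the two steps, $\lambda^{2r}-3\lambda^r+1$ divides $\Phi_{\mathcal{T}}$. This contradicts the hypothesis that $(\lambda^r-1)^{m(1)}$ is the only self-reciprocal factor, so $\mathcal{T}$ must be a hyperstar. For $m\geq 2$ the hyperstar indeed realizes the hypothesis, although only the stated direction is needed.

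The main obstacle is the Helly-type claim in the second step: pairwise intersecting edges of a hypertree share a common vertex. I would prove it by contradiction. Suppose three pairwise intersecting edges $e_1,e_2,e_3$ have no common vertex. Then the three pairwise intersection points are distinct, since a coincidence of two of them would give a common vertex. These points and edges then form a Berge cycle of length $3$, contradicting that $\mathcal{T}$ is a hypertree. For more edges, Helly's property for subtree families, or induction, extends the result. A secondary point to check is positivity of $a_H$ for the specific labeled copy of $P_3^{(r)}$, that is, $(r-1)^{r-1}>r^{r-2}$ for $r\geq3$.
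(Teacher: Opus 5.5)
Your proof is correct and follows the paper's route: a hypertree that is not a hyperstar contains a copy of $\mathcal{P}_3^{(r)}$, and Theorem~\ref{relation_between_matching_characteristic} then makes the palindromic factor $\lambda^{2r}-3\lambda^r+1$ divide $\Phi_{\mathcal{T}}(\lambda)$. You also supply two steps the paper leaves implicit, the existence of $\mathcal{P}_3^{(r)}$ (where linearity already reduces your Helly step to the three-edge case) and the positivity of $a_H$, though your side remark on hyperstars has a harmless slip: $\varphi_{\mathcal{S}_k^{(r)}}(\lambda)=\lambda^{k(r-1)+1}-k\lambda^{(k-1)(r-1)}=\lambda^{(k-1)(r-1)}(\lambda^r-k)$, so the reduced factor is $\lambda^r-k$, not $\lambda^{r-1}-k$.
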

	\begin{proof}
		Let $\mathcal{T}'$ be a $r$-uniform hypertree, whose only self-reciprocal factor is $(\lambda^r-1)^{m(1)}$. Suppose that $\mathcal{T}'$ is not isomorphic to a hyperstar, then $\mathcal{T}'$ induces a connected sub-hypergraph isomorphic to $\mathcal{P}_3^{(r)}$. By using Theorem \ref{relation_between_matching_characteristic}, $(\lambda^{2r}-3\lambda^r+1)$ is a factor in $\Phi_{\mathcal{T}'}(\lambda),$ a contradiction.  
	\end{proof}
	
	\begin{proposition}
		For an $r$-uniform hypertree $\mathcal{T}$ with at least two hyperedges, if the (multi) subset $B=spec(\mathcal{T}) \setminus (\{0 ^{(\eta)}\} \cup \{ \zeta^i \sqrt[r]{2}^{(m(\sqrt[r]{2}))} | 0 \leq i \leq r-1, \zeta^r=1\})$ of the spectrum of $\mathcal{T}$ is closed under inverse, then $\mathcal{T} \cong \mathcal{P}_2^{(r)}$ or $\mathcal{T} \cong \mathcal{P}_3^{(r)}$.  
	\end{proposition}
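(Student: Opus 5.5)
We argue via Theorem \ref{relation_between_matching_characteristic}: $\Phi_{\mathcal{T}}(\lambda)$ is a product of powers of matching polynomials $\varphi_H(\lambda)$, one for each labeled connected sub-hypertree $H$ of $\mathcal{T}$. We sort these factors by the shape of $H$ and check which shapes leave, after deleting the zero eigenvalues and the roots of $\lambda^r=2$, a part that is closed under inversion. The small cases first.

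A single vertex gives $\varphi=\lambda$, which contributes only zeros. A single edge $\mathcal{P}_1^{(r)}$ gives $\lambda^{r}-1$ up to a power of $\lambda$, and this factor is self-reciprocal. Two edges sharing a vertex, $\mathcal{P}_2^{(r)}$, give $\lambda^{2r-1}-2\lambda^{r-1}=\lambda^{r-1}(\lambda^r-2)$, and its nonzero roots are exactly the excluded $\zeta^i\sqrt[r]{2}$. A path on three edges $\mathcal{P}_3^{(r)}$ gives, up to a power of $\lambda$, $\lambda^{2r}-3\lambda^r+1$ (compare the Example), which is palindromic. So if every connected sub-hypertree of $\mathcal{T}$ is one of these four shapes, $B$ is closed under inverse. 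Since $\mathcal{T}$ is connected with at least two edges, this forces $\mathcal{T}\cong\mathcal{P}_2^{(r)}$ or $\mathcal{P}_3^{(r)}$. We read the statement as the converse direction and also note that these two trees do satisfy the hypothesis.

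For the main direction, suppose $\mathcal{T}$ is neither $\mathcal{P}_2^{(r)}$ nor $\mathcal{P}_3^{(r)}$ and has $m\ge 3$ edges (for $m=2$ we have $\mathcal{T}\cong\mathcal{P}_2^{(r)}$). Then $\mathcal{T}$ contains a connected sub-hypertree isomorphic to one of the following:
\begin{itemize}
\item the hyperstar $\mathcal{S}_3^{(r)}$, with $\varphi\sim\lambda^{r}-3$;
\item the path $\mathcal{P}_4^{(r)}$;
\item a three-edge tree not isomorphic to $\mathcal{S}_3^{(r)}$ or $\mathcal{P}_3^{(r)}$, where two edges meet the middle edge at distinct vertices, a shape that exists since $r\ge 3$.
\end{itemize}
In each case we compute the matching polynomial directly. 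For example, $\mathcal{S}_3^{(r)}$ gives $x^3-3x^2$ with $x=\lambda^r$ after factoring out powers of $\lambda$, i.e.\ the factor $\lambda^r-3$. For the hyperstar, $\lambda^r-3$ has roots of modulus $3^{1/r}$. An inverse would be a root of $3\lambda^r-1$, and by the rational root theorem and irreducibility arguments this cannot occur as a factor among matching polynomials, which are monic with integer coefficients. We then show that $\lambda^r-3$ really appears in $\Phi_{\mathcal{T}}$ with a positive exponent. This needs $a_H\neq 0$, which holds for $r\ge 3$ because $(r-1)^{r-1}-r^{r-2}>0$. Its roots lie in $B$ and are not $\sqrt[r]{2}$-type, so they must have inverses in $B$. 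Those inverses would have to be roots of some $\varphi_{H'}$, which is a monic integer polynomial in $\lambda^r$.

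The key lemma, and the main obstacle, is the following. The constant term of $\varphi_{H'}/\lambda^{k}$ is $\pm m_{\max}(H')$, the number of maximum matchings. Reciprocal closure across different factors cannot be checked factor by factor, because an inverse root could come from another sub-hypertree's polynomial. We handle this with a global argument: take the multiset $B$, and observe that the product of all elements of $B$ equals the product over the factors of their constant terms divided by the powers of $2$ removed. Closure under inversion forces this product to be $\pm 1$ in absolute value, so $|\prod|=1$. We then compute that $\prod_H m_{\max}(H)^{a_H}$ exceeds the available powers of $2$ whenever a sub-hypertree with $m_{\max}\ge 3$ (such as $\mathcal{S}_3^{(r)}$) or with an odd constant coefficient other than $1$ occurs. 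The hard part is doing this bookkeeping carefully with the exponents $a_H$. We would handle the $\mathcal{P}_4^{(r)}$ and spread-out cases through the same constant-term or norm comparison, using the explicit polynomials in $x=\lambda^r$ and checking that the extra irreducible factor is non-palindromic with a constant term not of the form $\pm 2^{s}$.
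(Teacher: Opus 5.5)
There is a genuine gap, in your case split. The third bullet does not describe a new shape. In a hypertree, three edges whose two outer edges meet the middle edge at distinct vertices form exactly $\mathcal{P}_3^{(r)}$, and $\mathcal{P}_3^{(r)}$, $\mathcal{S}_3^{(r)}$ are the only three-edge hypertrees.

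What you actually miss are the trees other than $\mathcal{P}_3^{(r)}$ with maximum degree $2$ and no $\mathcal{P}_4^{(r)}$. A longest-path argument shows these are exactly the spiders: an edge $e_0$ with $k$ pendant edges attached at $k$ distinct vertices of $e_0$, where $3\le k\le r$ (they exist because $r\ge 3$).

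Your method cannot handle spiders. Their connected sub-hypertrees are a vertex, a single edge, and $e_0$ together with $j$ of the legs. So with $x=\lambda^r$, the only non-trivial factors of $\Phi_{\mathcal{T}}$ are $x-1$, $x-2$ and $(x-1)^j-x^{j-1}$ for $2\le j\le k$; for $k=3$ the new factor is $x^3-4x^2+3x-1$. All of these except $x-2$ have constant term $\pm1$, and $x=2$ is a root only of $x-2$. Hence the product of the elements of $B$ has modulus exactly $1$. Your constant-term/norm bookkeeping therefore gives no contradiction, and your claim that the extra factor has constant term not of the form $\pm 2^s$ is false here. Moreover, $x=3$ is not a root of any of these factors. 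So no argument through $3^{1/r}$ and the rational root theorem can settle this shape; that is the paper's route, which your $\mathcal{S}_3^{(r)}$ and $\mathcal{P}_4^{(r)}$ cases correctly reproduce.

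A fix for spiders goes as follows. The polynomial $(x-1)^k-x^{k-1}$ equals $-1$ at $x=1$ and is positive for large $x$. So it has a real root $x_k>1$ with $x_k\neq 2$, and $x_k^{1/r}\in B$. Its inverse is an eigenvalue only if $y=1/x_k\in(0,1)$ is a root of some factor; note that $y$ satisfies $(1-y)^k=y$.
\begin{itemize}
\item The factors $x-1$, $x-2$, and $(x-1)^j-x^{j-1}$ with $j$ odd have no roots in $(0,1)$.
\item For $j$ even, a common root would give $y=(1-y)^{k-j}(1-y)^j=(1-y)^{k-j}y^{j-1}$, i.e. $y^{j-2}(1-y)^{k-j}=1$. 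This is impossible for $0<y<1$, $2\le j\le k$ and $k\ge 3$.
\end{itemize}

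Two smaller remarks. First, for $\mathcal{S}_3^{(r)}$ and $\mathcal{P}_4^{(r)}$, your observation that $3^{-1/r}$ is not an algebraic integer already excludes every factor at once, so the global product lemma is unnecessary there. Second, that lemma only works under multiplicity-preserving (SR-type) closure. In it, ``exceeds the available powers of $2$'' should read ``is divisible by the odd prime $3$''.
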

	\begin{proof}
		By direct computation, it can be seen that if $\mathrm{T}$ is isomorphic to $\mathcal{P}_2^{(r)}$ or $\mathcal{P}_3^{(r)}$, then the multi-set $B$ is closed under inverse.
		When $m=2$, the hypertree is isomorphic to $\mathcal{P}_2^{(r)} \cong \mathcal{S}_2^{(r)}.$
		When $m=3$, the hypertree is either isomorphic to $\mathcal{P}_3^{(r)}$ or $\mathcal{S}_3^{(r)}.$ Suppose $\mathcal{T} \cong \mathcal{S}_3^{(r)}; $ $B$ is not closed under inverse, as $3^{\frac{1}{r}} \in B$ and $3^{-\frac{1}{r}}  \not\in B$.
		Let us consider the case when $m \geq 4$. 
		Suppose $\mathcal{T} \cong \mathcal{P}_m^{(r)}$; then $3^{\frac{1}{r}} \in B$ and by using Theorem \ref{eigenvalues_of_power} and Rational Root Theorem, $3^{-\frac{1}{r}}  \not\in B$. 
		The case is similar if $\mathcal{T} \cong \mathcal{S}_m^{(r)}.$
		Suppose $\mathcal{T} \not\cong \mathcal{P}_m^{(r)}$ and $\mathcal{T} \not\cong \mathcal{S}_m^{(r)}$; then $\mathcal{T}$ must have one of the following two as an induced sub-hypergraph: 
		
		\noindent Case 1. A hyperedge attached to a pendant vertex of a non-pendent hyperedge of $\mathcal{P}_3^{(r)}$, the depiction of which has shown in Figure \ref{Case1}.
		
		\begin{figure}[H]
			\centering
			\includegraphics[width=0.2\linewidth]{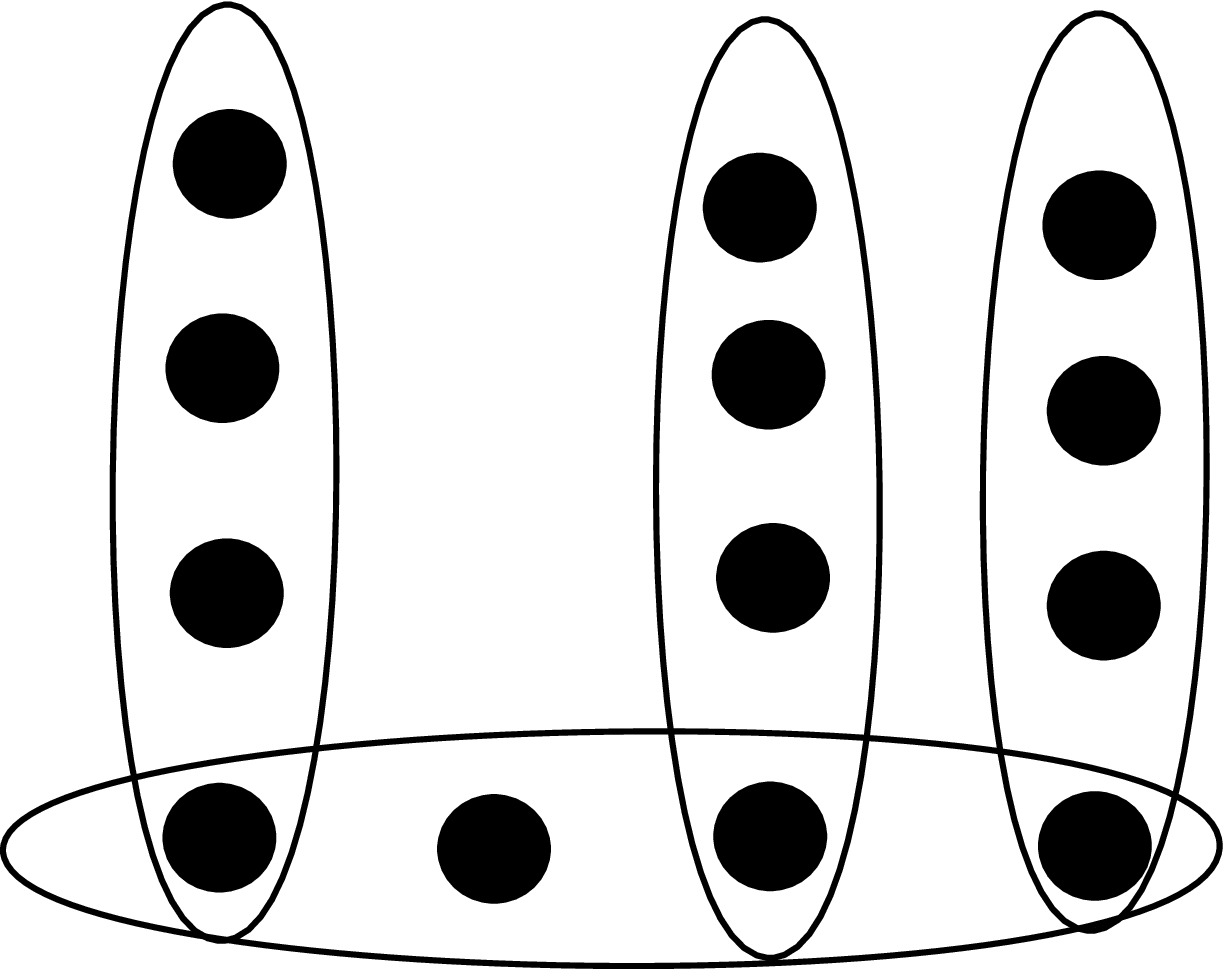}
			\caption{Sub-hypertree $\mathcal{T}_1$}
			\label{Case1}
		\end{figure}
		
		\noindent Case 2. A hyperedge attached to a non-pendent vertex of $\mathcal{P}_3^{(r)}$, which is depicted in Figure \ref{Case2}.
		
		\begin{figure}[H]
			\centering
			\includegraphics[width=0.2\linewidth]{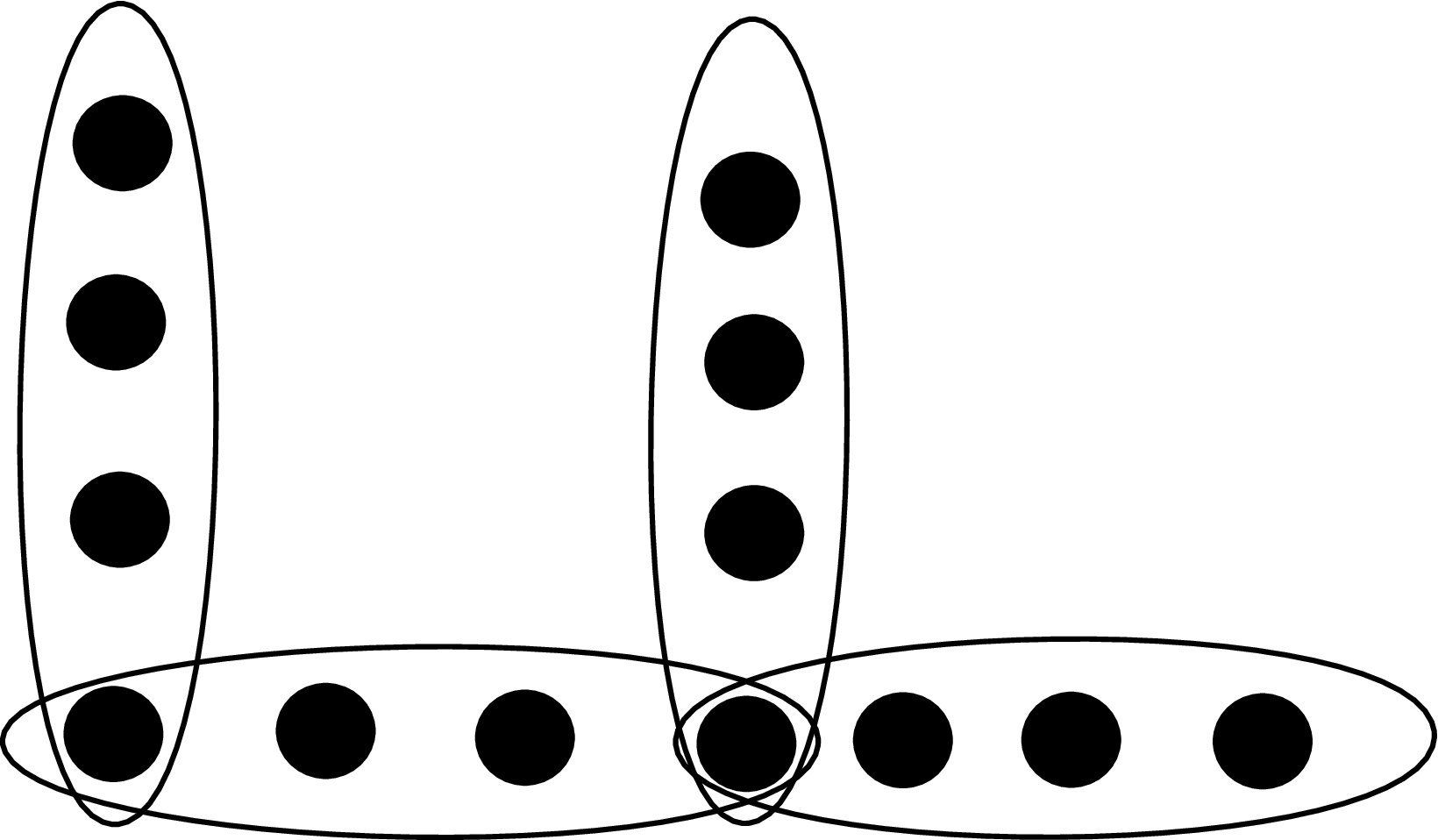}
			\caption{Sub-hypertree $\mathcal{T}_2$}
			\label{Case2}
		\end{figure}
		
		Suppose that $\mathcal{T}$ contains $\mathcal{T}_1$ as a sub-hypergraph, then $\Phi_{\mathcal{T}}(\lambda)$ contains $(\lambda^{2r}-4\lambda^r+3)$ as a factor, and hence $3^{\frac{1}{r}}$ as an eigenvalue of $\mathcal{T}.$ Even otherwise, if $\mathcal{T}$ contains $\mathcal{T}_2$ as an induced sub-hypergraph, then $3^{\frac{1}{r}}$ is an eigenvalue of $\mathcal{T}$. Now by using the Rational Root Theorem, the proof concludes.
		
	\end{proof}

	\section*{Acknowledgment}
	
	Authors acknowledge Manipal Academy of Higher Education for overall support.

	\footnotesize
	\bibliographystyle{unsrt}
	\bibliography{bib_file}


		
		


\end{document}